\newcommand{\colourcomment}[3]
{
\ifthenelse{\boolean{commentBoolVar}}{{\color{#2}(#1: #3)}}{}
}
\newtheorem{theorem}{Theorem}[section]
\newtheorem{lemma}[theorem]{Lemma}
\newtheorem{proposition}[theorem]{Proposition}
\newtheorem{example}[theorem]{Example}
\newtheorem*{theorem*}{Theorem}
\newtheorem*{lemma*}{Lemma}
\newtheorem*{proposition*}{Proposition}
\newtheorem*{corollary*}{Corollary}
\newtheorem{remark}[theorem]{Remark}
\newtheorem{problem}[theorem]{Problem}
\newcommand{\C}{\mathbb C}
\newcommand{\Q}{\mathbb Q}
\newcommand{\Z}{\mathbb Z}
\newcommand{\be}{\begin{eqnarray}}
\newcommand{\ee}{\end{eqnarray}}
\newcommand{\End}{\mathrm{End}}
\newcommand{\Perm}{\mathrm{Perm}}
\newcommand{\Aut}{\mathrm{Aut}}
\newcommand{\Map}{\mathrm{Map}}
\newcommand{\F}{\mathbb{F}}
\newcommand{\Hol}{\mathrm{Hol}}
\begin{document}

\title[Hopf Forms and Hopf-Galois Theory]{Hopf Forms and Hopf-Galois Theory}
\author{Timothy Kohl}
\address{Department of Mathematics and Statistics, Boston University, 111 Cummington Mall, Boston, MA 02215 USA}
\email{tkohl@math.bu.edu} 
\author{Robert Underwood}
\address{Department of Mathematics and Department of Computer Science, Auburn University at Montgomery, Montgomery, AL, 36124 USA}
\email{runderwo@aum.edu}

\date{\today}

\maketitle



\begin{abstract}
Let $K$ be a finite field extension of $\Q$ and let $N$ be a finite group with automorphism group $F=\Aut(N)$.  R. Haggenm\"{u}ller and B. Pareigis have shown that there is a bijection 
\[\Theta: {\mathcal Gal}(K,F)\rightarrow {\mathcal Hopf}(K[N])\] 
from the collection of $F$-Galois extensions of $K$ to the collection of Hopf forms of the group ring $K[N]$.  
For $N=C_n$, $n\ge 1$, $C_p^m$, $p$ prime, $m\ge 1$, and $N=D_3,D_4,Q_8$, we show that $\Q[N]$ admits an absolutely semisimple Hopf form $H$
and find $L$ for which $\Theta (L)=H$.  Moreover, if $H$ is the Hopf algebra given by a Hopf-Galois structure on a Galois extension $E/K$, we show how to construct the preimage of $H$ under $\Theta$ assuming certain conditions.
\end{abstract}

\section{Introduction}
Let $K$ be a finite field extension of $\Q$ and let $N$ be a finite group with automorphism group $F=\Aut(N)$.  By a result of R. Haggenm\"{u}ller and B. Pareigis there is a bijection 
\[\Theta: {\mathcal Gal}(K,F)\rightarrow {\mathcal Hopf}(K[N])\] 
from the collection of $F$-Galois extensions of $K$ to the collection of Hopf forms of the group ring $K[N]$ \cite[Theorem 5]{HP86}.   This paper concerns the map $\Theta$ and its inverse $\Theta^{-1}$.


The map $\Theta$ can be used to classify Hopf forms of $K[N]$ for certain $N$. 
For instance let $N=C_n$ denote the cyclic group of order $n$.  If $n=2$ then $F$ is trivial and so $K$ is the only $F$-Galois extension of $K$.  Consequently $\Theta(K)=K[C_2]$ is the only Hopf form of 
$K[C_2]$.  For the cases $n=3,4,6$, 
\[F=\Aut(C_n)=\Z_n^*=C_2.\]
The $C_2$-Galois extensions of $K$ are completely classified as the quadratic extensions $L=K[x]/(x^2-b)$,
where $b\in K^\times$ \cite{Sm72}.  Thus the map $\Theta$ yields a complete classification of all Hopf 
forms of $K[C_n]$ for the cases $n=3,4,6$ \cite[Theorem 6]{HP86}.  



For $n\ge 2$, two special Hopf forms of $K[C_n]$ can be identified. The first is the trivial Hopf form $K[C_n]$.  If $L$ is the trivial $F$-Galois extension $\mathrm{Map}(F,K)$ of $K$, then $\Theta(L)=K[C_n]$.   The second is $(K[C_n])^*$, the linear dual of 
$K[C_n]$, which is the ``absolutely semisimple" Hopf form of 
$K[C_n]$.   If $L=K[x]/(\Phi_n(x))$, where $\Phi_n(x)$ is the $n$th cyclotomic polynomial, then $L$ is a 
$\Z_n^*$-Galois extension of $K$ and $\Theta(L)=(K[C_n])^*$.  

In \cite[p. 91]{Pa90}, B. Pareigis posed the following problem:  for which finite groups $N$ does $K[N]$, $K=\Q$, admit an 
absolutely semisimple Hopf form?  As noted above $\Q[C_n]$ admits an absolutely semisimple Hopf form.   We show that $\Q[N]$ admits such a form for the abelian group $C_p^m$, $m\ge 1$, and non-abelian groups  $D_3$, $D_4$ and $Q_8$.  In each case we compute the $F$-Galois extension $L$ of $\Q$ for  which $\Theta(L)$ is the absolutely semisimple Hopf form.

There is a natural application of $\Theta$ to Hopf-Galois theory.   Let $(H,\cdot)$ be a Hopf-Galois structure of type $N$ 
on the Galois extension of fields $E/K$.  Then $H$ is a Hopf form of $K[N]$ and thus $\Theta(L)=H$ for some $F$-Galois extension $L$ of $K$, $F=\Aut(N)$.   We show how to construct the $F$-Galois extension $L$ for which 
$\Theta(L)=H$ under certain conditions.  


\section{Galois Extensions and Hopf Forms}

Let $F$ be a finite group.  An {\em $F$-Galois extension} of $K$ is a commutative $K$-algebra $L$ that satisfies

\vspace{.2cm}

(i)\ $F$ is a subgroup of $\mathrm{Aut}_K(L)$,

\vspace{.2cm}

(ii)\ $L$ is a finitely generated, projective $K$-module,

\vspace{.2cm}

(iii)\ $F\subseteq \mathrm{End}_K(L)$ is a free generating system over $K$. 

\vspace{.2cm}

\noindent The notion of $F$-Galois extension generalizes the usual definition of a Galois extension of fields.  The $K$-algebra of maps $\mathrm{Map}(F,K)$ is the {\em trivial $F$-Galois extension} of $K$ where the action of 
$F$ on $\mathrm{Map}(F,K)$ is given as $g(\phi)(h) = \phi(g^{-1}h)$ for $g,h\in F$, $\phi\in \Map(F,K)$.  We let ${\mathcal Gal}(K,F)$ denote the collection of all $F$-Galois extensions of $K$. 

B. Pareigis \cite[Theorem 4.2]{Pa90} has completely characterized $F$-Galois extensions.

\begin{theorem} [Pareigis] \label{galois} Let $F$ be a finite group.  Then $L$ is an $F$-Galois extension of $K$
if and only if 
\[L=\underbrace{M\times M\times \cdots \times M}_n\]
where $M$ is a $U$-Galois field extension of $K$ for some subgroup $U$ of $F$ of index $n$. 
\end{theorem}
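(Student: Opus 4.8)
The plan is to prove the two implications separately, after fixing the content of condition (iii). I read the assertion that $F \subseteq \End_K(L)$ is a free generating system as the statement that $\{g : g \in F\}$ is a free basis of $\End_K(L)$ as a left $L$-module; equivalently, the crossed-product map $j\colon L \# F \to \End_K(L)$, $a g \mapsto \bigl(x \mapsto a\,g(x)\bigr)$, is an isomorphism. Comparing $K$-dimensions already forces $\dim_K L = |F|$, and this is exactly the Chase--Harrison--Rosenberg Galois condition. I will therefore use its standard consequences freely: $L$ is separable over $K$, the fixed ring is $L^F = K$, and the companion map $\gamma\colon L \otimes_K L \to \Map(F,L)$, $a \otimes b \mapsto \bigl(g \mapsto a\,g(b)\bigr)$, is an isomorphism.

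For the ``if'' direction, given a $U$-Galois field extension $M/K$ with $[F:U] = n$, I would build the required algebra by induction. Set
\[
L = \{\, f\colon F \to M : f(u\sigma) = u\cdot f(\sigma)\ \text{for all } u \in U,\ \sigma \in F \,\},
\]
with $F$ acting by right translation $(\tau\cdot f)(\sigma) = f(\sigma\tau)$. Choosing right coset representatives for $U\backslash F$ yields a $K$-algebra isomorphism $L \cong M \times \cdots \times M$ ($n$ factors), whence $L$ is $K$-free of rank $n\,[M:K] = n|U| = |F|$, giving (ii); translation acts by $K$-algebra automorphisms and is faithful because $U$ acts faithfully on $M$, giving (i). For (iii) I would check that the map $\gamma$ attached to $(L,F)$ is bijective, and this reduces, one coset at a time, to the corresponding isomorphism for the $U$-Galois extension $M/K$; so (iii) is inherited from the Galois property of $M$.

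For the ``only if'' direction, I would use that an $F$-Galois $L$ is a separable commutative $K$-algebra, hence a finite product $L \cong \prod_{i=1}^{r} M_i$ of separable field extensions $M_i/K$; let $e_1,\dots,e_r$ be its primitive idempotents. Since $F$ acts by $K$-algebra automorphisms it permutes the $e_i$, and the $F$-orbit sums are precisely the primitive idempotents of $L^F = K$. As $K$ is a field this leaves a single orbit, so $F$ acts transitively, every factor is isomorphic to $M := Le_1$, and $r = [F:U]$ with $U = \mathrm{Stab}_F(e_1)$; this index is the $n$ of the statement, and counting dimensions gives $[M:K] = |F|/n = |U|$.

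The remaining and most delicate step is to descend the Galois property to the single factor. The subgroup $U$ fixes $e_1$ and hence acts on the field $M = Le_1$ by $K$-automorphisms, and restriction to the $e_1$-component identifies $L^F$ with $M^U$, so $M^U = K$. By Artin's lemma the image of $U$ in $\Aut_K(M)$ has fixed field $K$ and therefore order $[M:K] = |U|$, which forces $U \to \Aut_K(M)$ to be injective with image the full Galois group; thus $M/K$ is $U$-Galois and $L \cong M \times \cdots \times M$ ($n$ factors), as required. I expect the main obstacles to be (a) making the transitivity argument precise enough to pin down $U$ and $n$ via the orbit sums, and (b) in the converse, verifying that the single Galois isomorphism $\gamma$ for $L$ really does follow from that for $M$, which is the technical heart of the induction.
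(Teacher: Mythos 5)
Your proposal is correct and follows essentially the same route as the paper's (sketched) proof: the only-if direction decomposes the separable algebra $L$ into fields and uses transitivity of $F$ on the primitive idempotents with $U$ the stabilizer of one of them, while your induced algebra $\{f\colon F\to M : f(u\sigma)=u\cdot f(\sigma)\}$ with right translation is precisely the coordinate-free form of the paper's transversal action $g(mf_i)=(g_{\rho(g)(i)}^{-1}gg_i)(m)f_{\rho(g)(i)}$. The details you add where the paper defers to Pareigis --- the orbit-sum argument in $L^F=K$ for transitivity, the identification $L^F\cong M^U$, and the Artin-lemma descent showing $U\to\Aut_K(M)$ is an isomorphism onto the Galois group --- are all sound.
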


\begin{proof} (Sketch)  Let $L$ be an $F$-Galois extension.  Then $L$ is a commutative, separable $K$-algebra and hence
\[L= M_1\times M_2\times \cdots \times M_n\]
where each $M_i$ is a separable field extension of $K$.  Let $f_1,f_2,\dots,f_n$ be the minimal orthogonal idempotents.
We have $M_i\cong M_j, \forall i,j$, hence
 \[L= \underbrace{M\times M\times \cdots \times M}_n,\]
where $M=M_1$.   Let $U$ be the stabilizer of $f_1$ in $F$.  Then $M$ is Galois over $K$ with group $U$ and $[L:U]=n$.

Conversely, let $U\le F$, $n=[F:U]$, and let $M/K$ be $U$-Galois. Let $g_1,g_2\cdots g_n$ be a left transversal for $U$ in $F$ and let $\rho: G\rightarrow S_n$ be defined as $\rho(g)(i)=j$ if $gg_iU=g_jU$.   Let 
\[L=\underbrace{M\times M\times \cdots \times M}_n\]
with minimal orthogonal idempotents $f_1,f_2,\cdots f_n$, and let the action of $F$ on $L$ be defined on each component as   
\[g(mf_i)=(g^{-1}_{\rho(g)(i)}gg_i)(m)f_{\rho(g)(i)},\]
for $m\in M$, $1\le i\le n$.   Then $L$ is an $F$-Galois extension of $K$.  (For details see \cite[Theorem 4.2]{Pa90}.)

 \end{proof}

Let $N$ be a finite group.  Then the group ring $K[N]$ is a $K$-Hopf algebra.  
Let $L$ be a faithfully flat $K$-algebra. An {\em $L$-Hopf form} of $K[N]$ is a $K$-Hopf algebra $H$ for which 
\[L\otimes_K H\cong L\otimes_K K[N]\cong L[N]\] 
as $L$-Hopf algebras.  A {\em Hopf form} of $K[N]$ is a $K$-Hopf algebra $H$ for which there exists a faithfully flat $K$-algebra $L$ with
$L\otimes_K H\cong L\otimes_K K[N]\cong L[N]$ as $L$-Hopf algebras.  The {\em trivial Hopf form} of $K[N]$ is $K[N]$.  
Let ${\mathcal Hopf}(K[N])$ denote the collection of all Hopf forms of $K[N]$.

R. Haggenm\"{u}ller and B. Pareigis \cite[Theorem 5]{HP86} have classified all Hopf forms of $K[N]$. 
 
\begin{theorem}  [Haggenm\"{u}ller and Pareigis]\label{HP}  Let $N$ be a finite group and let 
$F=\mathrm{Aut}(N)$.   There is a bijection $\Theta: {\mathcal Gal}(K,F)\rightarrow {\mathcal Hopf}(K[N])$ which associates to each $F$-Galois extension $L$ of $K$, the Hopf form 
$H=\Theta(L)$ of $K[N]$
defined as the fixed ring
\[H=(L[N])^F,\]
where the action of $F$ on $N$ is through the automorphism group $F=\mathrm{Aut}(N)$ and the action of $F$ on $L$
is the Galois action.  The Hopf form $H$ is an $L$-Hopf form of $K[N]$ with isomorphism 
$\psi: L\otimes_K H\rightarrow L[N]$ defined as $\psi(x\otimes h)=xh$.  
\end{theorem}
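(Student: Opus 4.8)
The plan is to treat the statement as an instance of faithfully flat (Galois) descent, using the defining property of an $F$-Galois extension to transport a Hopf structure back and forth between $K$ and $L$. First I would record the descent engine supplied by the hypothesis: by condition (iii), $F$ is a free generating system of $\End_K(L)$, which is equivalent to the Galois condition $L\otimes_K L\cong \Map(F,L)\cong\prod_{g\in F}L$; in particular $L$ is faithfully flat over $K$. This yields faithfully flat descent along $L/K$ in which the descent data are encoded by the semilinear $F$-action: for any $L$-module $W$ carrying a semilinear $F$-action (meaning $g(\lambda w)=g(\lambda)\,g(w)$ for $\lambda\in L$) the natural map $L\otimes_K W^F\to W$ is an isomorphism, and $(W_1\otimes_L W_2)^F\cong W_1^F\otimes_K W_2^F$ for the diagonal action.

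Next I would apply this to $W=L[N]=L\otimes_K K[N]$ with the diagonal action $g\cdot(x\,n)=g(x)\,g(n)$, where $g$ acts on $L$ by the Galois action and on $N\subseteq K[N]$ through $F=\Aut(N)$. Because each $g\in F$ is a group automorphism of $N$, it acts on $K[N]$ by a $K$-Hopf-algebra automorphism; combined with the ring action on $L$, the diagonal action is by semilinear $K$-Hopf-algebra automorphisms of $L[N]$. Consequently $H=(L[N])^F$ is a $K$-subalgebra, and the $L$-linear comultiplication, counit and antipode of $L[N]$ are $F$-equivariant and hence restrict to $H$: using $(L[N]\otimes_L L[N])^F\cong H\otimes_K H$ from the first step, $\Delta$ restricts to a map $H\to H\otimes_K H$, making $H$ a $K$-Hopf algebra. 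The multiplication map $\psi\colon L\otimes_K H\to L[N]$, $\psi(x\otimes h)=xh$, is then the descent isomorphism; it is automatically a map of $L$-Hopf algebras because the structure maps were descended, and faithfully flat descent forces it to be bijective. This exhibits $H$ as an $L$-Hopf form of $K[N]$ and shows $\Theta$ is well defined.

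To prove that $\Theta$ is a bijection I would construct its inverse by running the same descent in reverse, the crucial structural input being that the $K$-Hopf-algebra automorphisms of $K[N]$ are exactly $F$. Over a field the grouplike elements of $K[N]$ are precisely $N$ (by linear independence of the group elements), so every Hopf automorphism permutes $N$ by a group automorphism, giving $\Aut_{\mathrm{Hopf}}(K[N])\cong\Aut(N)=F$. Given a Hopf form $H$ with a splitting isomorphism $L'\otimes_K H\cong L'[N]$, comparing the two pullbacks to $L'\otimes_K L'$ produces a descent $1$-cocycle valued in $\Aut_{\mathrm{Hopf}}=F$; this cocycle is exactly the data of an $F$-Galois extension $L$ (which by Theorem \ref{galois} is a finite product of copies of a Galois field extension), and unwinding the identifications gives $\Theta(L)\cong H$, proving surjectivity. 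Injectivity follows because the construction $H=(L[N])^F$ is reversible: $L$ together with its $F$-action is recovered from $L\otimes_K H\cong L[N]$ by descent, so $\Theta(L_1)\cong\Theta(L_2)$ forces $L_1\cong L_2$. Conceptually, both collections are classified by the nonabelian cohomology set $H^1(K,F)$, and $\Theta$ is the canonical identification.

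The step I expect to be the main obstacle is surjectivity, that is, producing from an abstract Hopf form split by some faithfully flat $L'$ an honest $F$-Galois extension $L$ realizing it. This requires pinning down $\Aut_{\mathrm{Hopf}}(K[N])=F$ rigorously via the grouplikes computation, translating the cocycle attached to $H$ into the concrete $F$-Galois data of Theorem \ref{galois}, and checking that the splitting ring may be replaced by such an $L$, i.e. that the cohomology class is represented by a genuine $F$-torsor. By contrast, the verifications in the first two paragraphs, namely the $F$-equivariance of $\Delta$, $\epsilon$ and $S$ and the bijectivity of $\psi$, are routine consequences of the descent formalism once the $F$-action is seen to preserve the Hopf structure.
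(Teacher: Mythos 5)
You should know at the outset that the paper does not actually prove Theorem~\ref{HP}: it is quoted from Haggenm\"{u}ller--Pareigis \cite[Theorem 5]{HP86} with no proof environment attached. So the only fair comparison is with the argument of the cited source, which is likewise a faithfully flat (Galois) descent argument: your first two paragraphs --- the equivalence $L\otimes_K L\cong \Map(F,L)$, the equivalence between $K$-modules and $L$-modules with semilinear $F$-action, the $F$-equivariance of $\Delta$, $\varepsilon$, $S$ on $L[N]$, and the identification of $\psi$ with the descent isomorphism --- are sound and are essentially how the well-definedness of $\Theta$ is established there. Your classification of both sides by a common nonabelian $H^1$ is also the right conceptual frame.

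The one place where your sketch, as written, would fail if executed literally is the claim that the descent cocycle attached to a form $H$ is ``valued in $\Aut_{\mathrm{Hopf}}=F$.'' The grouplikes computation gives $\Aut_{\mathrm{Hopf}}(R[N])\cong\Aut(N)$ only when $R$ is connected; the cocycle, however, lives over $L'\otimes_K L'$ (and its triple tensor power), which is almost never connected --- indeed for $L'$ an $F$-Galois extension one has $L'\otimes_K L'\cong \Map(F,L')$, with many idempotents. Over such rings the grouplikes of $R[N]$ are the locally constant $N$-valued functions on $\Spec R$, so the automorphism functor is the \emph{constant group scheme} $\underline{\Aut(N)}$, not the abstract group $F$. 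This is not a pedantic distinction: it is exactly the reason the classifying objects on the Galois side are $F$-Galois \emph{algebras} --- products of copies of a field as in Theorem~\ref{galois} --- rather than field extensions, and it is what makes the identification of $H^1_{\mathrm{fppf}}(K,\underline{F})$ with $H^1(\Gal(K^{\mathrm{sep}}/K),F)$ (trivial action, cocycles = continuous homomorphisms up to conjugacy) legitimate, using that $\underline{F}$ is finite \'etale so that any faithfully-flat-split form is already split by a finite separable extension. Relatedly, your injectivity argument (``$L$ is recovered by descent'') should be made concrete: the canonical recovery is $L\cong$ the torsor of Hopf isomorphisms $\underline{\mathrm{Iso}}_{\mathrm{Hopf}}(K[N],H)$, which visibly depends only on the isomorphism class of $H$. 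You did flag the surjectivity step as the main obstacle, which is the correct diagnosis; repairing it requires replacing $F$ by $\underline{F}$ throughout the cocycle bookkeeping, after which the outline goes through and agrees with the proof in \cite{HP86}.
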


\begin{proposition} \label{trivial} Let $N$ be a finite group, let $F=\mathrm{Aut}(N)$, and let $L=\Map(F,K)$ denote the trivial $F$-Galois extension of $K$.   Then 
$\Theta(L)=(L[N])^{F}\cong K[N]$.
\end{proposition}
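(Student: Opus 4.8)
The plan is to make the trivial $F$-Galois extension $L=\Map(F,K)$ completely explicit and then compute the fixed ring $(L[N])^F$ directly. As a $K$-algebra, $L\cong\prod_{g\in F}K$ carries the standard basis of orthogonal idempotents $\{e_g : g\in F\}$, where $e_g$ is the indicator function of $g$; from the formula $g(\phi)(h)=\phi(g^{-1}h)$ one reads off that the Galois action permutes these idempotents by left translation, $\sigma\cdot e_g=e_{\sigma g}$. Consequently $L[N]$ has $K$-basis $\{e_g\,n:(g,n)\in F\times N\}$, and the $F$-action (Galois on $L$, by automorphisms on $N$) permutes this basis according to the diagonal action $\sigma\cdot(g,n)=(\sigma g,\sigma(n))$ on the index set $F\times N$.

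First I would observe that this index action is free, since left translation of $F$ on itself already is: $\sigma g=g$ forces $\sigma=\id$. Hence every orbit has size $|F|$, and each orbit contains a unique representative of the form $(\id,n)$ (apply $\sigma=g^{-1}$), so the orbits are in bijection with $N$. Because the invariants of a permutation module are spanned by the orbit sums, it follows that $H=(L[N])^F$ has $K$-basis
\[
t_n \;:=\; \sum_{\sigma\in F}\sigma\cdot(e_{\id}\,n)\;=\;\sum_{\sigma\in F}e_\sigma\,\sigma(n),\qquad n\in N,
\]
and in particular $\dim_K H=|N|$.

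Next I would check that $n\mapsto t_n$ is the desired $K$-Hopf isomorphism $K[N]\to H$. The algebra part is a short orthogonality computation: from $e_\sigma e_\tau=\delta_{\sigma\tau}e_\sigma$ one gets $t_n t_m=\sum_\sigma e_\sigma\,\sigma(n)\sigma(m)=\sum_\sigma e_\sigma\,\sigma(nm)=t_{nm}$ and $t_{1_N}=\sum_\sigma e_\sigma=1$, so the $t_n$ satisfy exactly the relations of $N$; being $|N|$ linearly independent elements spanning $H$, they give an algebra isomorphism. For the coalgebra structure I would work inside $L[N]\otimes_L L[N]$, where Galois descent for the $F$-Galois extension $L/K$ identifies $H\otimes_K H$ with $(L[N]\otimes_L L[N])^F$ and $\Delta_H$ with the restriction of $\Delta_{L[N]}$. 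Since each group element is grouplike in $L[N]$, $L$-linearity gives $\Delta_{L[N]}(t_n)=\sum_\sigma e_\sigma\bigl(\sigma(n)\otimes_L\sigma(n)\bigr)$; on the other hand, moving the idempotents across $\otimes_L$ and again using $e_\sigma e_\tau=\delta_{\sigma\tau}e_\sigma$ shows that the image of $t_n\otimes_K t_n$ is precisely this same element, so $t_n$ is grouplike in $H$. The analogous one-line computations give $\varepsilon(t_n)=\sum_\sigma e_\sigma=1$ and $S(t_n)=\sum_\sigma e_\sigma\,\sigma(n)^{-1}=t_{n^{-1}}$, matching $K[N]$ exactly.

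The main obstacle is bookkeeping rather than depth: one must keep straight that the comultiplication of the $K$-Hopf form $H$ is computed inside $L[N]\otimes_L L[N]$ (not $\otimes_K$), and that the descent identification $H\otimes_K H=(L[N]\otimes_L L[N])^F$ is exactly what allows the equality $\Delta_{L[N]}(t_n)=t_n\otimes_L t_n$ to be promoted to $\Delta_H(t_n)=t_n\otimes_K t_n$. Once the $\otimes_L$ versus $\otimes_K$ distinction is handled correctly, every verification collapses to the idempotent orthogonality relations used above, reflecting the conceptual point that the split torsor $\Map(F,K)$ descends the group ring to its own trivial form.
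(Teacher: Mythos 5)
Your proof is correct, and it amounts to a completed, fully general version of what the paper only sketches. The paper's own proof of Proposition \ref{trivial} is two lines: it asserts that $(L[N])^F$ has a $K$-basis of group-like elements, so $H=K[N']$ for some finite group $N'$, and then deduces $N'\cong N$ abstractly from the form property $L\otimes_K H\cong L[N]$ --- a step which, since $L=\Map(F,K)$ is not a field, really requires first base-changing along a projection $L\to K$ before one can count grouplikes. Your orbit-sum computation --- the free diagonal action of $F$ on the basis $\{e_g\,n\}$, the invariant basis $t_n=\sum_{\sigma\in F}e_\sigma\,\sigma(n)$, and the verifications $t_nt_m=t_{nm}$ and $\Delta(t_n)=t_n\otimes t_n$ via idempotent orthogonality --- is exactly the general form of the paper's worked illustration for $N=C_3$, where the orbit sums are $1$, $e_1\sigma+e_g\sigma^2$, $e_g\sigma+e_1\sigma^2$. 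What your route buys is that the abstract final step disappears: $n\mapsto t_n$ is an explicit Hopf isomorphism identifying the grouplikes of $H$ with $N$ directly, and your careful handling of the descent identification $H\otimes_K H\cong(L[N]\otimes_L L[N])^F$ (moving the idempotents across $\otimes_L$ to collapse $t_n\otimes_L t_n$ to $\sum_\sigma e_\sigma(\sigma(n)\otimes\sigma(n))$) makes rigorous precisely the point the paper leaves implicit, namely why grouplikeness of the $t_n$ in $L[N]$ descends to grouplikeness in the $K$-Hopf algebra $H$. So the two arguments share the same skeleton (compute the fixed ring, exhibit a grouplike basis), but yours is self-contained and constructive where the paper's is a sketch resting on an unproved assertion plus a base-change argument.
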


\begin{proof} (Sketch)  $H=(L[N])^{F}$ has a $K$-basis consisting of group-like elements.  Hence, 
$H=K[N']$ for some finite group $N'$.  Since $L\otimes_K H=L\otimes_K K[N']\cong L[N]$ as Hopf algebras, we conclude that $N'\cong N$. 
\end{proof}

We illustrate Proposition \ref{trivial} in the case that $N=C_3=\langle \sigma\rangle=\{1,\sigma,\sigma^2\}$, the cyclic group of order $3$.  Here, $F=\mathrm{Aut}(C_3)=C_2=\{1,g\}$ with action given as $1(\sigma)=\sigma$, $g(\sigma) = \sigma^2$.  A $K$-basis for $L=\Map(C_2,K)$ is $\{e_1,e_g\}$, the basis dual to the basis $\{1,g\}$ for $C_2$.  
We have $e_1(1)=1$, $e_1(g)=0$, $e_g(1)=0$, $e_g(g)=1$; $\{e_1,e_g\}$ is a basis of mutually orthogonal idempotents: $e_1^2=e_1$, $e_g^2=e_g$, $e_1e_g=e_ge_1=0$, $e_1+e_g=1$, thus,
\[L= Ke_1\oplus Ke_g.\]
The $C_2$-Galois action on $L$ is given as 
\[1\cdot e_1 = e_1,\quad 1\cdot e_g = e_g,\quad g\cdot e_1 = e_g,\quad g\cdot e_g = e_1.\]

We compute $(L[C_3])^{C_2}$ directly.  A typical element of $L[C_3]$ is $\sum_{i=0}^2(\alpha_ie_1+\beta_ie_g)\sigma^i$ for $\alpha_i,\beta_i\in K$.   We require that

\begin{eqnarray*}
g(\sum_{i=0}^2(\alpha_ie_1+\beta_ie_g)\sigma^i) & = &  \sum_{i=0}^2g(\alpha_ie_1+\beta_ie_g)\sigma^{2i} \\
& = &   \sum_{i=0}^2(\alpha_ie_g+\beta_ie_1)\sigma^{2i} \\
& = &  \sum_{i=0}^2(\alpha_ie_1+\beta_ie_g)\sigma^{i}.
\end{eqnarray*}
Thus $\alpha_0 = \beta_0$, $\alpha_1 = \beta_2$, $\alpha_2 = \beta_1$.  So a typical element of 
$(L[C_3])^{C_2}$ is 

\vspace{.2cm}

$(\alpha_0e_1+\beta_0e_g)1+ (\alpha_1e_1+\beta_1e_g)\sigma+ (\alpha_2e_1+\beta_2e_g)\sigma^2$

\vspace{-.5cm}

\begin{eqnarray*}
&= & \alpha_0(e_1+e_g)1+ (\alpha_1e_1+\alpha_2e_g)\sigma+ (\alpha_2e_1+\alpha_1e_g)\sigma^2 \\
& = & \alpha_01+\alpha_1(e_1\sigma+e_g\sigma^2)+\alpha_2(e_g\sigma+e_1\sigma^2).
\end{eqnarray*}
Thus a $K$-basis for $(L[C_3])^{C_2}$ is $\{1, e_1\sigma+e_g\sigma^2, e_g\sigma+e_1\sigma^2\}$.
We have 

\begin{eqnarray*}
\Delta(e_1\sigma+e_g\sigma^2) & = &  e_1(\sigma\otimes \sigma)+ e_g(\sigma^2\otimes \sigma^2)  \\
& = &  (e_1\sigma+e_g\sigma^2)\otimes (e_1\sigma+e_g\sigma^2),
\end{eqnarray*}
and so, $e_1\sigma+e_g\sigma^2$ is group-like.  Likewise $e_g\sigma+e_1\sigma^2$ is group-like. 
Thus $(L[C_3])^{C_2}\cong K[C_3]$. 


\section{The Absolutely Semisimple Hopf Form of $K[C_n]$}

Let $N$ be any finite group.  For $n\ge 1$ let $\zeta_n$ denote a primitive $n$th root of unity.  By Maschke's theorem, $K[N]$ is semisimple.   Extending scalars to $\C$ yields the Wedderburn-Artin decomposition
\[\C[N]\cong \mathrm{Mat}_{n_1}(\C)\times \mathrm{Mat}_{n_2}(\C)\times\cdots\times 
\mathrm{Mat}_{n_l}(\C)\]
for integers $n_i\ge 1$, $1\le i\le l$.   Let $L$ be a faithfully flat $K$-algebra.  Then any $L$-Hopf form of $K[N]$ is also semisimple.  An $L$-Hopf form 
$H$ of $K[N]$ is {\em absolutely semisimple} if
\[H\cong \mathrm{Mat}_{n_1}(K)\times \mathrm{Mat}_{n_2}(K)\times\cdots\times 
\mathrm{Mat}_{n_l}(K).\]

\begin{proposition}  Let $N=C_n=\langle \sigma\rangle$, $n\ge 1$.  Then the linear dual $(K[C_n])^*$ is an absolutely semisimple Hopf form 
of $K[C_n]$.
\end{proposition}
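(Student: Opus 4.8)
The plan is to treat the two assertions in the proposition separately: first that $(K[C_n])^*$ is literally the absolutely semisimple form coming from the Wedderburn–Artin decomposition, and second that it is a Hopf form of $K[C_n]$ in the sense of the definition preceding the proposition.

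I would begin by making $(K[C_n])^*$ explicit. Writing $\{e_0,e_1,\dots,e_{n-1}\}$ for the basis dual to $\{1,\sigma,\dots,\sigma^{n-1}\}$, the multiplication on the dual is induced by the comultiplication $\Delta(\sigma^i)=\sigma^i\otimes\sigma^i$ of $K[C_n]$, which forces the relations $e_ie_j=\delta_{ij}e_i$. Hence $(K[C_n])^*\cong K\times K\times\cdots\times K$ ($n$ copies) as a $K$-algebra. Since $C_n$ is abelian, every irreducible complex representation is one-dimensional, so the Wedderburn–Artin decomposition of $\C[C_n]$ has $l=n$ summands with every $n_i=1$; thus the absolutely semisimple form of $K[C_n]$ is exactly $\mathrm{Mat}_1(K)\times\cdots\times\mathrm{Mat}_1(K)=K^n$, matching $(K[C_n])^*$. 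This disposes of the ``absolutely semisimple'' half once the Hopf-form half is in place.

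Next I would exhibit a splitting algebra. I take $L=K(\zeta_n)$, a field extension of $K$ and therefore faithfully flat, and I use that dualizing commutes with this flat base change, so $L\otimes_K(K[C_n])^*\cong(L[C_n])^*$. The $n$ characters $\chi_j\colon C_n\to L^\times$ given by $\chi_j(\sigma)=\zeta_n^{\,j}$ are elements of $(L[C_n])^*$, and the computation $(\chi_a\cdot\chi_b)(\sigma^i)=(\chi_a\otimes\chi_b)(\Delta\sigma^i)=\chi_a(\sigma^i)\chi_b(\sigma^i)=\chi_{a+b}(\sigma^i)$ shows that $\chi\mapsto\chi$ extends to an $L$-algebra homomorphism $\Phi\colon L[\widehat{C_n}]\to(L[C_n])^*$ from the group algebra of the character group $\widehat{C_n}$. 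Dually, from $\chi(\sigma^i\sigma^j)=\chi(\sigma^i)\chi(\sigma^j)$ each $\chi$ is group-like in $(L[C_n])^*$, so $\Phi$ respects comultiplication, counit, and antipode as well, i.e.\ it is a homomorphism of $L$-Hopf algebras.

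The remaining step, which I expect to be the crux, is to verify that $\Phi$ is bijective. This reduces to the invertibility over $L$ of the character matrix $[\zeta_n^{\,ij}]_{0\le i,j\le n-1}$, a Vandermonde/discrete-Fourier matrix whose determinant is a product of factors $\zeta_n^{\,j}-\zeta_n^{\,i}$; these are nonzero because $\zeta_n$ is primitive and $\chr K=0$, so the $n$th roots of unity are distinct and the determinant is a unit. Hence $\Phi$ is a Hopf algebra isomorphism, and identifying $\widehat{C_n}\cong C_n$ (via $\chi_1\mapsto\sigma$) yields $L\otimes_K(K[C_n])^*\cong L[\widehat{C_n}]\cong L[C_n]$ as $L$-Hopf algebras. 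This shows $(K[C_n])^*$ is an $L$-Hopf form of $K[C_n]$, and together with the first paragraph it is the absolutely semisimple Hopf form, completing the proof.
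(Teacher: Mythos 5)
Your proof is correct and follows essentially the same route as the paper: both arguments base-change to a field containing a primitive $n$th root of unity and use the characters of $C_n$ to produce the Hopf algebra isomorphism, your map $\Phi\colon L[\widehat{C_n}]\to (L[C_n])^*$ being precisely the inverse of the paper's map $p_j\mapsto \frac{1}{n}\sum_i \chi_j(\sigma^{-i})\sigma^i$. The only difference is packaging: you verify invertibility self-containedly via the Vandermonde determinant of the character matrix, where the paper cites the orthogonality relations (Serre, Exercise~6.4).
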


\begin{proof}  It is well-known that $(K[C_n])^*$ is a $K$-Hopf algebra.   Let $\{p_j\}_{j=0}^{n-1}$ be the basis for $(K[C_n])^*$ dual to the basis $\{\sigma^i\}_{i=0}^{n-1}$ of $K[C_n]$.   Let $E/K$ be any field extension containing a primitive $n$th
root of unity.  Then $E$ is a faithfully flat $K$-algebra.   Let $\{\chi_j\}_{j=0}^{n-1}$ denote the set of $n$ irreducible characters of $C_n$, which all are of degree $1$ (since $C_n$ is abelian). The $\chi_j$ take values in $E$.  
There exists an isomorphism of $E$-Hopf algebras 
\[\varphi: E\otimes_K (K[C_n])^*\rightarrow E\otimes K[C_n]\cong E[C_n]\]
defined by
\[p_j\mapsto {1\over n} \sum_{i=0}^{n-1} \chi_j(\sigma^{-i})\sigma^i,\]
$0\le j\le n-1$ (cf. \cite[Exercise 6.4]{Se77}).
Thus $(K[C_n])^*$ is an $E$-form of $K[C_n]$. 
Since
\[\C[C_n]\cong \underbrace{\C\times \C\times \cdots \times \C}_n\]
and
\[(K[C_n])^*\cong \underbrace{K\times K\times \cdots \times K}_n,\]
$(K[C_n])^*$ is an absolutely semisimple Hopf form of $K[C_n]$. 

\end{proof}

B. Pareigis has shown that $(K[C_n])^*$ is the only absolutely semisimple Hopf form of $K[C_n]$ \cite[Theorem 4.3]{Pa90}.

\begin{theorem}[Pareigis]  \label{P} $K[C_n]$ has a uniquely determined absolutely semisimple Hopf form $H=(K[C_n])^*$, where $(K[C_n])^*$ is the linear dual of $K[C_n]$.  
\end{theorem}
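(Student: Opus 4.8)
The existence of an absolutely semisimple Hopf form is already furnished by the preceding Proposition, so the entire content of the theorem is the \emph{uniqueness} assertion, and that is what my plan addresses. The strategy is to show that any absolutely semisimple Hopf form $H$ of $K[C_n]$ must be the function algebra $(K[G])^*$ of a finite group $G$ of order $n$, and then to use the form condition to force $G\cong C_n$.

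First I would unwind the definition of absolute semisimplicity. Since $C_n$ is abelian, every irreducible complex representation is one-dimensional, so the Wedderburn--Artin decomposition reads $\C[C_n]\cong \C\times\cdots\times\C$ ($n$ factors); hence by definition an absolutely semisimple form satisfies $H\cong K\times\cdots\times K=K^n$ as a $K$-algebra. Thus $\Spec H$ consists of exactly $n$ distinct $K$-rational points, namely the $n$ algebra maps $H\to K$. The comultiplication, counit and antipode of $H$ endow the finite set $G=\Hom_{K\text{-alg}}(H,K)$ with a group structure of order $n$ under convolution, and reconstructing $H$ as the algebra of $K$-valued functions on $G$ identifies $H\cong (K[G])^*$ as $K$-Hopf algebras. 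Because $H$ is a form of the commutative, cocommutative Hopf algebra $K[C_n]$, both commutativity and cocommutativity descend along the faithfully flat splitting extension, so $H=(K[G])^*$ is cocommutative and therefore $G$ is abelian.

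It remains to prove $G\cong C_n$, and this is the step I expect to carry the real weight, since an abelian group of order $n$ need not be cyclic. Here I would invoke the form condition directly. By the Haggenm\"{u}ller--Pareigis correspondence together with Theorem \ref{galois}, $H$ is split by an $F$-Galois extension, which is a finite product of copies of a separable field extension $M/K$; composing a projection onto $M$ with an embedding of $M$ into a field $\Omega\supseteq K$ containing a primitive $n$th root of unity $\zeta_n$, and base-changing the splitting isomorphism along the resulting $K$-algebra map into $\Omega$, I obtain $\Omega\otimes_K H\cong \Omega[C_n]$ as $\Omega$-Hopf algebras. Since linear dualization commutes with flat base change for finite-dimensional Hopf algebras, $\Omega\otimes_K H\cong (\Omega[G])^*$ as well, whence $(\Omega[G])^*\cong \Omega[C_n]$. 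Dualizing once more gives $\Omega[G]\cong (\Omega[C_n])^*$, and since $\zeta_n\in\Omega$ the character computation of the preceding Proposition yields $(\Omega[C_n])^*\cong \Omega[C_n]$. Therefore $\Omega[G]\cong \Omega[C_n]$ as $\Omega$-Hopf algebras.

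Finally I would compare group-like elements. Over a field the group-like elements of a group algebra $\Omega[A]$ are precisely the elements of $A$, so the isomorphism $\Omega[G]\cong \Omega[C_n]$ restricts to a group isomorphism $G\cong C_n$. Consequently $H\cong (K[G])^*\cong (K[C_n])^*$, which establishes uniqueness. The main obstacle is the third paragraph: controlling the interaction of linear duality with faithfully flat base change and locating a single field $\Omega$ that simultaneously splits $H$ and trivializes the dual form $(K[C_n])^*$; once this is arranged, the group-like comparison closes the argument cleanly.
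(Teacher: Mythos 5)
Your argument is correct, but there is nothing in the paper to compare it against line by line: the paper proves only the existence half (the preceding Proposition exhibits $(K[C_n])^*$ as an absolutely semisimple form) and quotes the uniqueness assertion from \cite[Theorem 4.3]{Pa90} without proof. Your proposal therefore supplies a genuinely self-contained proof of the part the paper leaves to the literature, and each step checks out: an absolutely semisimple form is by definition $K^n$ as an algebra, so $G=\Hom_{K\text{-}\mathrm{alg}}(H,K)$ has exactly $n$ points; convolution makes $G$ a group (note that closure under inverses uses $\phi\mapsto\phi\circ S$, which lands back in $G$ because $H$ is commutative --- commutativity descends faithfully flatly from $L[C_n]$ --- so $S$ is an algebra map); the evaluation pairing then identifies $H\cong (K[G])^*$ as Hopf algebras since $\Delta_H$ dualizes the convolution product; and the final comparison of group-likes in $\Omega[G]\cong\Omega[C_n]$ pins down $G\cong C_n$. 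Two remarks. First, your detour through the Haggenm\"{u}ller--Pareigis correspondence and Theorem \ref{galois} to find a field splitting $H$ can be shortened: for any faithfully flat splitting algebra $L$, base change along $L\to L/\mathfrak{m}$ for a maximal ideal $\mathfrak{m}$ already yields a field extension of $K$ splitting $H$, with no appeal to the structure of $F$-Galois extensions. Second, the step $(\Omega[C_n])^*\cong\Omega[C_n]$ silently uses $\mathrm{char}\,K=0$ (division by $n$ in the Fourier isomorphism); this is harmless here since the paper takes $K$ a finite extension of $\Q$, but the uniqueness statement as you prove it would fail in characteristic dividing $n$, where $K[C_n]$ is not even semisimple. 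The observation that cocommutativity forces $G$ abelian is correct but redundant, since the group-like comparison gives $G\cong C_n$ outright.
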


As a Hopf form of $K[C_n]$, $(K[C_n])^*$ comes from some $F$-Galois extension $L$, that is, $\Theta(L)=(K[C_n])^*$ for some $L$.  In fact, if $L=K[x]/(\Phi_n(x))$, where $\Phi(x)$ is the $n$th cyclotomic polynomial, then $L$ is an $F$-Galois extension of $K$ and $\Theta(L)= (L[C_n])^F=(K[C_n])^*$.

Note that $K[x]/(\Phi_n(x))$ is not necessarily a field.  For example, if $n=15$ and $K=\Q(\zeta_3)$, then
\[K[x]/(\Phi_{15}(x))\cong K(\zeta_{15})\times K(\zeta_{15}).\]
In this case
\[F=\Aut(C_{15})=\Z_{15}^*=C_2\times C_4.\]  
The faithfully flat $K$-algebra 
$K(\zeta_{15})\times K(\zeta_{15})$ is a $(C_2\times C_4)$-Galois extension of $K$ with
\[\Theta(K(\zeta_{15})\times K(\zeta_{15}))=(K[C_{15}])^*.\]  

If $K=\Q$, then $\Q[x]/(\Phi_n(x))$ is a field, equal to $\Q(\zeta_n)$ 
where $\zeta_n$ denotes a primitive $n$th root of unity; $\Q(\zeta_n)$ is a $\Z_n^*$-Galois extension of $\Q$.     
In the case that $n=p^m$, for $p$ an odd prime, $m\ge 1$, the first author has shown that $(K[C_{p^m}])^*$ is the 
fixed ring \cite [Proposition 3.1]{Ko18}. 

\begin{proposition} [Kohl] \label{abss2} Let $p$ be an odd prime and let $L=\Q(\zeta_{p^m})$, $m\ge 1$.  Then
\[\Theta(L)= (L[C_{p^m}])^F=(\Q[C_{p^m}])^*,\]
where $F=\Aut(C_{p^m})=\Z_{p^m}^*$. 
\end{proposition}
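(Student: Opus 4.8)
The plan is to combine an explicit idempotent computation inside the fixed ring with the uniqueness statement of Theorem \ref{P}. First I would observe that $L=\Q(\zeta_{p^m})$ is a genuine Galois field extension of $\Q$ with $\Gal(L/\Q)=\Z_{p^m}^*=F$, so $L\in\mathcal{Gal}(\Q,F)$ and $\Theta(L)=(L[C_{p^m}])^F$ is defined and is a Hopf form of $\Q[C_{p^m}]$ by Theorem \ref{HP}. In particular $L\otimes_\Q\Theta(L)\cong L[C_{p^m}]$, so $\dim_\Q\Theta(L)=p^m$. Since $\C[C_{p^m}]\cong\C\times\cdots\times\C$ with $p^m$ factors, a Hopf form of $\Q[C_{p^m}]$ is absolutely semisimple exactly when it is isomorphic as a $\Q$-algebra to $\Q^{p^m}$; by Theorem \ref{P} there is exactly one such form, namely $(\Q[C_{p^m}])^*$. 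Hence it suffices to prove that the $\Q$-algebra $(L[C_{p^m}])^F$ is isomorphic to $\Q^{p^m}$.

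To do this I would exhibit a complete set of $p^m$ orthogonal idempotents lying in the fixed ring. Since $L$ contains a primitive $p^m$-th root of unity, the character idempotents
\[e_j=\frac{1}{p^m}\sum_{i=0}^{p^m-1}\zeta_{p^m}^{-ij}\sigma^i\in L[C_{p^m}],\qquad 0\le j\le p^m-1,\]
are precisely the images under the map $\varphi$ of the dual-basis elements $p_j$ appearing in the proof that $(K[C_n])^*$ is absolutely semisimple (with $E=L$, $\chi_j(\sigma^{-i})=\zeta_{p^m}^{-ij}$). A short computation using $\sum_i\zeta_{p^m}^{-i(j-k)}=p^m\delta_{jk}$ gives $e_je_k=\delta_{jk}e_j$ and $\sum_j e_j=1$.

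The key step is to check that each $e_j$ is fixed by $F$. Here I would use that the $F$-action on $L[C_{p^m}]$ is diagonal: $a\in\Z_{p^m}^*$ acts on $L$ by $\zeta_{p^m}\mapsto\zeta_{p^m}^{a}$ and on $N=\langle\sigma\rangle$ by $\sigma\mapsto\sigma^{a}$. Applying $a$ to $e_j$ and reindexing the sum by $k=ia\bmod p^m$ (a bijection of $\Z_{p^m}$ since $\gcd(a,p^m)=1$) turns the term $\zeta_{p^m}^{-ija}\sigma^{ia}$ into $\zeta_{p^m}^{-kj}\sigma^{k}$, so $a\cdot e_j=e_j$. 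Thus $\bigoplus_j\Q e_j\subseteq(L[C_{p^m}])^F$ is a $\Q$-subalgebra isomorphic to $\Q^{p^m}$, and comparing $\Q$-dimensions ($p^m$ on each side) forces $(L[C_{p^m}])^F=\bigoplus_j\Q e_j\cong\Q^{p^m}$. By the uniqueness in Theorem \ref{P} this absolutely semisimple form must be $(\Q[C_{p^m}])^*$, so $\Theta(L)=(\Q[C_{p^m}])^*$.

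I expect the only real obstacle to be the fixedness computation, and even that is routine once the diagonal action is written out; the reindexing is exactly where the automorphism action on $N$ and the Galois action on $L$ cancel, which is why the \emph{same} group $\Z_{p^m}^*$ appears both as $\Aut(C_{p^m})$ and as $\Gal(L/\Q)$. I note that nothing in this argument uses that $p$ is odd—only that $L=\Q(\zeta_{p^m})$ is a splitting field for $C_{p^m}$ whose Galois group is all of $F$—so the same reasoning would cover $p=2$ as well; the hypothesis is inherited from the cited source \cite{Ko18}. If one prefers to bypass Theorem \ref{P} entirely, one can instead restrict $\varphi$ to the $\Q$-forms: being $L$-linear and compatible with the Hopf operations, it carries $(\Q[C_{p^m}])^*=\bigoplus_j\Q p_j$ onto $\bigoplus_j\Q e_j=(L[C_{p^m}])^F$ and so restricts to a $\Q$-Hopf isomorphism from $(\Q[C_{p^m}])^*$ onto $\Theta(L)$.
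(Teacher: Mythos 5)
Your proposal is correct and follows essentially the same route as the paper: the paper's proof exhibits exactly the idempotents $e_j=\frac{1}{p^m}\sum_{i=0}^{p^m-1}\zeta_{p^m}^{-ij}\sigma^i$, notes they are fixed, mutually orthogonal idempotents, and concludes by a dimension count that they form a $\Q$-basis of $(L[C_{p^m}])^F\cong(\Q[C_{p^m}])^*$, which is precisely your computation with the fixedness reindexing written out. Your closing via the uniqueness in Theorem \ref{P} is a harmless variant (your alternative ending, restricting $\varphi$ to a $\Q$-Hopf isomorphism onto $\bigoplus_j\Q e_j$, is what the paper's terse ``as $\Q$-Hopf algebras'' implicitly does), and your observation that oddness of $p$ is not used is consistent with the paper's earlier general claim that $\Theta(\Q(\zeta_n))=(\Q[C_n])^*$.
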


\begin{proof}  Let $C_{p^m}=\langle \sigma\rangle$.  Then the elements 
\[e_j = {1\over p^m} \sum_{i=0}^{p^m-1}{\zeta_{p^m}^{-ij}}\sigma^i\in L[C_{p^m}],\]
$0\le j\le p^m-1$, are in the fixed ring $(L[C_{p^m}])^F$ and are mutually orthogonal idempotents.  
It follows that they form a $\Q$-basis for $(L[C_{p^m}])^F$.  Thus $(L[C_{p^m}])^F\cong (\Q[C_{p^m}])^*$
as $\Q$-Hopf algebras. 

\end{proof}

\section{A Problem of Pareigis}

B. Pareigis has stated the following problem in \cite[p. 91]{Pa90}. 

\begin{problem}[Pareigis] \label{qP} For which finite groups $N$ does $\Q[N]$ admit an absolutely semisimple Hopf form?
\end{problem}

We review solutions to Problem \ref{qP} for various $N$. 

\subsection{$N$ Abelian}

\begin{example}
Let $N=C_n$, $n\ge 1$.  By Theorem \ref{P}, $\Q[C_n]$ admits the absolutely semisimple Hopf form $(\Q[C_n])^*$.  
If $L=\Q(\zeta_n)$, then $\Theta(L)=(\Q[C_n])^*$. 
\end{example}



\begin{example} Let $N=C_p^m$ be the elementary abelian group of order $p^m$, $p\ge 2$, $m\ge 1$.    
Let $E=\Q(\zeta_p)$.  Then 
\[E\otimes_\Q (\Q[C_p^m])^*\cong E\otimes_\Q \Q[C_p^m],\]
as $E$-Hopf algebras.  Moreover,
\[\C[C_p^m]\cong \underbrace{\C\times \C\times \cdots \times \C}_{p^m}\]
and 
\[(\Q[C_p^m])^*\cong \underbrace{\Q\times \Q\times \cdots \times \Q}_{p^m}.\]
Thus
$\Q[C_p^m]$ admits the absolutely semisimple Hopf form $(\Q[C_p^m])^*$.
\end{example}

By Theorem \ref{HP} there is an $\mathrm{Aut}(C_p^m)$-Galois extension $L/\Q$ for which
\[\Theta(L)=(\Q[C_p^m])^*.\]
What is the structure of $L$?

We have $\mathrm{Aut}(C_p^m)=\mathrm{GL}_m(\F_p)$ with 
$\vert \mathrm{GL}_m(\F_p)\vert = \prod_{i=0}^{m-1} (p^m-p^i)$.
There is a subgroup of $\mathrm{GL}_m(\F_p)$ defined as
\[U = \{aI_m:\ a\in \F_p^*\}\cong \F_p^*\]
of index $l= (\prod_{i=0}^{m-1} (p^m-p^i))/(p-1)$; $\Q(\zeta_p)$ is a 
$U$-Galois extension of fields.  Thus by Theorem \ref{galois} 
there exists a $\mathrm{GL}_m(\F_p)$-Galois extension of $\Q$
\[L=\underbrace{\Q(\zeta_p)\times \Q(\zeta_p)\times \cdots \times \Q(\zeta_p)}_l.\]
Computing the fixed ring yields
\[\Theta(L)=(L[C_p^m])^{\mathrm{GL}_m(\F_p)}=(\Q[C_p^m])^*.\]

\subsection{$N$ Non-abelian}

We first prove a lemma.

\begin{lemma}  \label{aut} Let  $D_3$, $D_4$ denote the $3$rd and $4$th dihedral groups, respectively.  Then

\vspace{.2cm}

(i)\ $Aut(D_3)=D_3$,

\vspace{.2cm}

(ii)\ $Aut(D_4) = D_4$.

\end{lemma}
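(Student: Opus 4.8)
The plan is to work from the standard presentation $D_n = \langle r, s \mid r^n = s^2 = 1,\ srs^{-1} = r^{-1}\rangle$ and to exploit the fact that any automorphism $\phi$ is completely determined by the pair $(\phi(r),\phi(s))$. Since automorphisms preserve order, $\phi(r)$ must have the same order as $r$, and $\phi(s)$ must be an order-$2$ element that together with $\phi(r)$ generates the whole group (for $n>2$ this forces $\phi(s)$ to be a reflection lying outside $\langle r\rangle$). Enumerating the admissible pairs gives an immediate upper bound on $|\Aut(D_n)|$; the remaining work is to show the bound is attained and to pin down the isomorphism type.

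For $D_3$ the rotation $r$ has order $3$, so $\phi(r)\in\{r,r^2\}$ (two choices), while $\phi(s)$ may be any of the three reflections $s, rs, r^2s$; hence $|\Aut(D_3)|\le 6$. For the reverse inequality I would invoke inner automorphisms: $D_3$ has trivial center, so conjugation embeds $D_3\cong \mathrm{Inn}(D_3)\hookrightarrow \Aut(D_3)$, giving $|\Aut(D_3)|\ge 6$. Combining the two bounds yields $\Aut(D_3)=\mathrm{Inn}(D_3)\cong D_3$. (Alternatively, one verifies directly that each of the six assignments respects the defining relations, so $|\Aut(D_3)|=6$, and a nonabelian group of order $6$ must be $\cong D_3$.)

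For $D_4$ the order-$4$ elements are exactly $r$ and $r^3$, so $\phi(r)\in\{r,r^3\}$; and $\phi(s)$ must be an involution generating $D_4$ with $\phi(r)$, which rules out the central involution $r^2$ and leaves the four reflections $s, rs, r^2s, r^3s$. Thus $|\Aut(D_4)|\le 8$. Here the inner-automorphism shortcut fails, since $Z(D_4)=\{1,r^2\}$ forces $\mathrm{Inn}(D_4)\cong D_4/Z(D_4)$ to have order only $4$. Instead I would check directly that every one of the eight assignments $r\mapsto r^{\pm1}$, $s\mapsto r^k s$ preserves the relations; the only nontrivial verification is $\phi(s)\phi(r)\phi(s)^{-1}=r^k r^{\pm1} r^{-k}=r^{\mp1}=\phi(r)^{-1}$. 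Hence each is an automorphism and $|\Aut(D_4)|=8$.

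The remaining, and main, difficulty is to determine which group of order $8$ this is, that is, to rule out $Q_8$ and the abelian possibilities. For this I would exhibit explicit generators: let $\alpha$ fix $r$ and send $s\mapsto rs$, and let $\beta$ invert $r$ and fix $s$. A short computation shows $\alpha$ has order $4$ (it cyclically shifts $s\mapsto rs\mapsto r^2s\mapsto\cdots$), $\beta$ has order $2$, and $\beta\alpha\beta^{-1}=\alpha^{-1}$; these are precisely the dihedral relations. Since every power of $\alpha$ fixes $r$ while $\beta(r)=r^{-1}$, we have $\beta\notin\langle\alpha\rangle$, so $\langle\alpha,\beta\rangle$ already has order $8$ and therefore equals $\Aut(D_4)\cong D_4$. (Equivalently one may identify $\Aut(D_n)\cong\Hol(C_n)=C_n\rtimes\Z_n^*$ for $n\ge 3$, which for $n=4$ gives $C_4\rtimes C_2\cong D_4$ under the inversion action.) Throughout, the counting is routine; the genuine obstacle is producing the order-$4$ generator $\alpha$ and checking the conjugation relation that forces the group to be $D_4$ rather than $Q_8$.
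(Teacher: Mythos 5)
Your proof is correct, but it takes a genuinely different route from the paper. The paper's argument is a two-line structural one: since the rotation subgroup $\langle r\rangle$ is characteristic in $D_n$, one has $\Aut(D_n)\cong\Hol(C_n)=C_n\rtimes\Aut(C_n)$, and then $\Hol(C_3)\cong C_3\rtimes C_2\cong D_3$ and $\Hol(C_4)\cong C_4\rtimes C_2\cong D_4$ (with $C_2$ acting by inversion) -- precisely the identification you mention only parenthetically at the end. Your version instead enumerates admissible images $(\phi(r),\phi(s))$ from the presentation, gets the upper bounds $6$ and $8$, and then attains them: for $D_3$ via $\mathrm{Inn}(D_3)\cong D_3$ (trivial center), and for $D_4$ by verifying the relations for all eight assignments and exhibiting generators $\alpha\colon r\mapsto r,\ s\mapsto rs$ and $\beta\colon r\mapsto r^{-1},\ s\mapsto s$ satisfying $\alpha^4=\beta^2=1$, $\beta\alpha\beta^{-1}=\alpha^{-1}$, which correctly rules out $Q_8$ and the abelian groups of order $8$. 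The trade-off: the holomorph argument is shorter, hides all computation in the semidirect-product structure, and uses one uniform idea for both cases, while your computation is more elementary and self-contained and makes the automorphisms completely explicit (which the paper never does). One slip to fix: in your relation check the middle expression should be $\phi(s)\phi(r)\phi(s)^{-1}=r^k\bigl(sr^{\pm 1}s^{-1}\bigr)r^{-k}=r^kr^{\mp 1}r^{-k}=r^{\mp 1}$; as written, $r^kr^{\pm 1}r^{-k}$ equals $r^{\pm 1}$, not $r^{\mp 1}$, since powers of $r$ commute -- the inversion comes from conjugation by $s$, not by $r^k$. You should also say explicitly that each of the eight relation-preserving endomorphisms is surjective (its image contains $r^{\pm 1}$ and a reflection, hence all of $D_4$), so that it is in fact an automorphism; this is immediate but needed for the count $|\Aut(D_4)|=8$.
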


\begin{proof}  Let $D_n$ denote the $n$th order dihedral group.  Then $\Aut(D_n)\cong \Hol(C_n)$ since the cyclic
subgroup of $D_n$ is characteristic.  To prove (i) note that
\[\Hol(C_3) = C_3 \rtimes Aut(C_3)\cong C_3\rtimes C_2\cong D_3.\]
To prove (ii), observe that
\[\Hol(C_4) = C_4 \rtimes Aut(C_4)\cong C_4\rtimes C_2\cong D_4,\]
where $C_2$ acts on $C_4$ by inversion.

\end{proof}

\begin{example}
Let $N=D_3$.  We have
\[\C[D_3]\cong \C\times \C\times \mathrm{Mat}_{2}(\C)\]
and by \cite[Example (7.39)]{CR81}
\[\Q[D_3]\cong \Q\times \Q\times \mathrm{Mat}_{2}(\Q).\]
Thus $\Q[D_3]$ is an absolutely semisimple Hopf form of itself.  By Lemma \ref{aut}(i), $\Aut(D_3)=D_3$ and so
\[\Theta(\mathrm{Map}(D_3,\Q))=\Q[D_3].\]
\end{example}

\begin{example}
Let $N=D_4$.  We have
\[\C[D_4]\cong \C\times \C\times \C\times \C\times \mathrm{Mat}_{2}(\C)\]
and by \cite[Example (7.39)]{CR81}
\[\Q[D_4]\cong \Q\times \Q\times \Q\times \Q\times \mathrm{Mat}_{2}(\Q).\]
Thus $\Q[D_4]$ is an absolutely semisimple Hopf form of itself.  By Lemma \ref{aut}(ii), $\Aut(D_4)=D_4$ and so
\[\Theta(\mathrm{Map}(D_4,\Q))=\Q[D_4].\]
\end{example}

\subsubsection{The case $N=Q_8$}  Let $N=Q_8$ denote the quaternion group
\[Q_8=\{\pm1,\pm i,\pm j,\pm k\},\]
with $ij=k$, $ji=-k$, $jk=i$, $kj=-i$, $ki=j$, $ik=-j$.  We have
\[\C[Q_8]\cong \C\times \C\times \C\times \C\times \mathrm{Mat}_{2}(\C),\]
yet
\[\Q[Q_8]\cong \Q\times \Q\times \Q\times \Q\times {\mathbb H},\]
where $\mathbb H$ is the rational quaternions.  Thus $\Q[Q_8]$ is not an absolutely semisimple form of itself.   
Does $\Q[Q_8]$ admit an absolutely semisimple Hopf form?  The answer is ``yes" as we will see below.   
The solution is due to C. Greither \cite[\S 3]{CGKKKTU20}.   

We first identify a collection of Hopf forms of $\Q[Q_8]$.   Let $\zeta=\zeta_4$.  Then $\Q(\zeta)$ is a Galois extension of $\Q$ with group $C_2=\{1,g\}$.  Let $\Aut_{\scriptsize Hopf}(\Q[Q_8]))$ denote the group of automorphisms of $\Q[Q_8]$ in the category of 
$\Q$-Hopf algebras.  
By \cite[\S 17.6, Theorem]{Wa79}, the $\Q(\zeta)$-forms of $\Q[Q_8]$ are classified by the 
$1$st cohomology set
\[H^1(\Q(\zeta)/\Q,\Aut_{\scriptsize Hopf}(\Q[Q_8])).\]
Translating to $C_2$-actions, we obtain
\[H^1(\Q(\zeta)/\Q,\Aut_{\scriptsize Hopf}(\Q[Q_8]))=H^1(C_2,\Aut_{\scriptsize Hopf}(\Q(\zeta)\otimes_\Q \Q[Q_8])).\]
by \cite[\S 17.7, Theorem]{Wa79}.
Since \[\Aut_{\scriptsize Hopf}(\Q(\zeta)\otimes_\Q \Q[Q_8])=
\Aut_{\scriptsize \Q(\zeta)\hbox{-}Hopf}(\Q(\zeta)[Q_8])=\Aut(Q_8),\]
the $\Q(\zeta)$-Hopf forms of $\Q[Q_8]$ are given by the cohomology set
\[H^1(C_2,\Aut(Q_8)).\]
The action of $C_2$ on $\Aut(Q_8)$ is trivial and so a $1$-cocycle in $H^1(C_2,\Aut(Q_8))$ is a homomorphism
\[\varrho: C_2\rightarrow \Aut(Q_8).\]
Given a homomorphism $\varrho\in H^1(C_2,\Aut(Q_8))$, the corresponding $\Q(\zeta)$-Hopf form is the fixed ring
\[H(\varrho) = (\Q(\zeta)[Q_8])^{C_2},\]
where $g\in C_2$ acts on $\Q(\zeta)$ as an element of the Galois group and on $Q_8$ through $\varrho(g)$. 

Let $\theta: C_2\rightarrow \Aut(Q_8)$ be the homomorphism with $\theta(1)=1$ and where $\theta(g)$ is conjugation by $k$.  
  
\begin{proposition} [Greither]  Let $H(\theta)$ be the Hopf form of $\Q[Q_8]$ corresponding to $\theta$.  Then the Wedderburn-Artin decomposition of $H(\theta)$ is
\[H(\theta)=\Q\times \Q\times \Q\times \Q\times \mathrm{Mat}_2(\Q).\]
Thus $H(\theta)$ is the absolutely semisimple Hopf form of $\Q[Q_8]$.
\end{proposition}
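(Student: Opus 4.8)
The plan is to use that $H(\theta)=(\Q(\zeta)[Q_8])^{C_2}$ is a semisimple $\Q$-algebra of dimension $8$ which becomes $\Q(\zeta)[Q_8]$ after extending scalars to $\Q(\zeta)$, and to compute its Wedderburn decomposition one block at a time. Since $\Q(\zeta)$ contains a primitive fourth root of unity the quaternionic block splits, giving
\[\Q(\zeta)[Q_8]\cong\Q(\zeta)\times\Q(\zeta)\times\Q(\zeta)\times\Q(\zeta)\times\mathrm{Mat}_2(\Q(\zeta)),\]
so it will suffice to identify the $\Q$-form to which each of these five blocks descends under the twisted $C_2$-action defining the fixed ring.

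First I would observe that $\theta(g)$, being conjugation by $k$, is an \emph{inner} automorphism of $Q_8$ and hence fixes every irreducible character. Consequently the $C_2$-action stabilizes each Wedderburn block above rather than permuting them, which is exactly what makes a block-by-block analysis possible. On each of the four one-dimensional blocks the relevant linear character of $Q_8$ takes values in $\{\pm1\}\subseteq\Q$, so the inner twist acts trivially and the $C_2$-action is simply complex conjugation of the scalar in $\Q(\zeta)$; each such block therefore descends to its fixed field $\Q$, yielding the four factors of $\Q$.

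The heart of the argument is the two-dimensional block $\mathrm{Mat}_2(\Q(\zeta))$. Here I would fix the standard representation $\rho\colon Q_8\to\GL_2(\Q(\zeta))$ and transport the defining $C_2$-action to the matrix algebra. Writing $\overline{(\,\cdot\,)}$ for entrywise conjugation and letting $P$ be the intertwiner with $\overline{\rho(x)}=P\rho(x)P^{-1}$, which exists because the two-dimensional character of $Q_8$ is real-valued (explicitly $\overline{\rho(x)}=\rho(jxj^{-1})$, so $P=\rho(j)$), the action takes the form of the semilinear involution $M\mapsto A\,\overline{M}\,A^{-1}$ with $A=\rho(k)P^{-1}=\rho(i)$. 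The fixed ring of such an involution is a central simple $\Q$-algebra split by $\Q(\zeta)$, and its class in $\mathrm{Br}(\Q(\zeta)/\Q)\cong\Q^\times/N_{\Q(\zeta)/\Q}(\Q(\zeta)^\times)$ is represented by the scalar $\lambda$ determined by $A\,\overline{A}=\lambda I$ (the matrix $A\overline{A}$ is automatically scalar because the twisted action is an involution). Thus the block descends to $\mathrm{Mat}_2(\Q)$ exactly when $\lambda$ is a norm from $\Q(\zeta)$, i.e.\ a sum of two rational squares, and to the division algebra $\mathbb{H}$ otherwise.

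The main obstacle — and the entire reason for choosing $\theta(g)$ to be conjugation by $k$ — is this splitting computation. Carrying it out gives $A\,\overline{A}=-\rho(i)^2=I$, so $\lambda=1$ is a norm and the block descends to $\mathrm{Mat}_2(\Q)$; for comparison, the untwisted descent has $A=P^{-1}$ with $A\,\overline{A}=-I$, and since $-1$ is not a sum of two rational squares this instead recovers the division algebra $\mathbb{H}$ inside $\Q[Q_8]$, confirming that the twist by $k$ is precisely what splits the quaternionic block. Assembling the five blocks then yields $H(\theta)\cong\Q\times\Q\times\Q\times\Q\times\mathrm{Mat}_2(\Q)$, matching the complex decomposition $\C\times\C\times\C\times\C\times\mathrm{Mat}_2(\C)$ factor by factor, so $H(\theta)$ is the absolutely semisimple Hopf form of $\Q[Q_8]$.
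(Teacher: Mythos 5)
Your proof is correct, and it takes a genuinely different route from the paper's. The paper disposes of the four one-dimensional factors by simply asserting that the abelian part of $H(\theta)$ equals that of $\Q[Q_8]$ (your observation that $\theta(g)$ is inner, hence fixes all irreducible characters, and that the linear characters are $\{\pm1\}$-valued, supplies the justification), and then handles the quaternionic block by brute force: it computes the fixed ring $(\Q(\zeta)\otimes_\Q{\mathbb H})^{C_2}$ directly, applying $g$ to a generic element of $\Q(\zeta)\oplus\Q(\zeta)u\oplus\Q(\zeta)v\oplus\Q(\zeta)w$ to extract the explicit $\Q$-basis $\{1,\zeta v,\zeta u,w\}$, whose multiplication table ($(\zeta u)^2=(\zeta v)^2=1$, $(\zeta v)(\zeta u)=w=-(\zeta u)(\zeta v)$) exhibits the fixed ring as the quaternion algebra $(1,1)_\Q\cong\mathrm{Mat}_2(\Q)$, with the nilpotent $\zeta u-w$ offered as an alternative witness of splitness. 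You instead transport the semilinear involution to the Wedderburn block $\mathrm{Mat}_2(\Q(\zeta))$ and read off the descent cocycle: your computations $P=\rho(j)$, $A=\rho(k)P^{-1}=\rho(i)$, and $A\overline{A}=I$ are all correct, as is the identification of the class of the fixed algebra with $\lambda \bmod N_{\Q(\zeta)/\Q}(\Q(\zeta)^\times)$ in $\mathrm{Br}(\Q(\zeta)/\Q)$, and your sanity check on the untwisted form ($A=P^{-1}$, $A\overline{A}=-I$, with $-1$ not a sum of two rational squares) correctly recovers ${\mathbb H}$ inside $\Q[Q_8]$. The trade-off: the paper's computation is elementary and self-contained, producing concrete generators of the split quaternion algebra; your argument leans on the $H^1(C_2,\PGL_2(\Q(\zeta)))$/Brauer-group formalism (essentially the same \cite[\S 17]{Wa79} machinery the paper already invokes to enumerate the forms), but in exchange it explains structurally \emph{why} conjugation by $k$ splits the block --- the twist shifts the descent cocycle from the non-norm $-1$ to the norm $1$ --- and it generalizes mechanically to any homomorphism $\varrho\colon C_2\rightarrow \Aut(Q_8)$ and to other group rings, where a bare fixed-ring computation would have to be redone from scratch.
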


\begin{proof}  We compute the fixed ring 
\[H(\theta) = (\Q(\zeta)[Q_8])^{C_2}.\]
Since the abelian part of $H(\theta)$ is equal to that of $\Q[Q_8]$, we have
\[H(\theta) = \Q\times \Q\times \Q\times \Q\times (\Q(\zeta)\otimes_\Q {\mathbb H})^{C_2}.\]
Writing
\[{\mathbb H} = (-1,-1)_\Q = \Q\oplus \Q u\oplus \Q v\oplus \Q\oplus w\]
with $u^2=-1$, $v^2=-1$, $uv=w$, $vu=-w$, 
we have that $g\in C_2$ acts on $\Q(\zeta)$ as an element of the Galois group and on ${\mathbb H}$ through conjugation by $w$. 

We claim that  
\[(\Q(\zeta)\otimes_\Q {\mathbb H})^{C_2}\cong (1,1)_\Q\cong \mathrm{Mat}_2(\Q).\] 
To this end, let
\[x=(a_0+a_1\zeta)1+(b_0+b_1\zeta)u+(c_0+c_1\zeta)v+(d_0+d_1\zeta)w\]
be an element in 
\[\Q(\zeta)\oplus \Q(\zeta) u\oplus \Q(\zeta) v\oplus \Q(\zeta) w=\Q(\zeta)\otimes_\Q {\mathbb H}\]
for $a_0,a_1,b_0,b_1,c_0,c_1,d_0,d_1\in \Q$. 
Then
\begin{eqnarray*}
g(x) & =  &  g((a_0+a_1\zeta)1+(b_0+b_1\zeta)u+(c_0+c_1\zeta)v+(d_0+d_1\zeta)w) \\
& = &  (a_0-a_1\zeta)1+(b_0-b_1\zeta)wuw^{-1}+(c_0-c_1\zeta)wvw^{-1}+(d_0-d_1\zeta)www^{-1} \\
& = &  (a_0-a_1\zeta)1+(b_0-b_1\zeta)(-u)+(c_0-c_1\zeta)(-v)+(d_0-d_1\zeta)w \\
& = &  (a_0-a_1\zeta)1+(-b_0+b_1\zeta)u+(-c_0+c_1\zeta)v+(d_0-d_1\zeta)w. \\
\end{eqnarray*}
Thus $g(x)=x$ if and only if $a_1=b_0=c_0=d_1=0$.   It follows that 
\[B=\{1,\zeta v,\zeta u,w\}\]
is a $\Q$-basis for $(\Q(\zeta)\otimes_\Q {\mathbb H})^{C_2}$.  Now,
$(\zeta v)^2=1$, $(\zeta u)^2= 1$, $(\zeta v)(\zeta u) = -1vu = w$, $(\zeta u)(\zeta v) = -1uv = -w$, and thus
$B$ is a $\Q$-basis for the quaternion algebra $(1,1)_\Q$, which by  \cite[Theorem 4.3]{Co19} is isomorphic to 
$\mathrm{Mat}_2(\Q)$.  Consequently,
\[H(\theta)=\Q\times \Q\times \Q\times \Q\times \mathrm{Mat}_2(\Q)\]
as claimed.

One could also conclude that 
\[(\Q(\zeta)\otimes_\Q {\mathbb H})^{C_2}\cong \mathrm{Mat}_2(\Q)\] 
by noting that 
$\zeta u-w$ is a non-zero nilpotent element of $(\Q(\zeta)\otimes_\Q {\mathbb H})^{C_2}$ of index $2$. 

\end{proof}

By a standard result $\Aut(Q_8)=S_4$ and so by Theorem \ref{HP} there is an $S_4$-Galois extension $L/\Q$ for which 
\[\Theta(L)=H(\theta).\]   
We compute $L/\Q$ as follows.  Let $U=\theta(C_2)\cong C_2$.  Then $\theta(C_2)$ is a subgroup of $S_4$
of index $[S_4:\theta(C_2)]=12$.  Thus by Theorem \ref{galois}, there exists an $S_4$-Galois extension of $\Q$
\[L=\underbrace{\Q(\zeta)\times \Q(\zeta)\times \cdots \times \Q(\zeta)}_{12}.\]
Moreover
\[\Theta(L)=(L[Q_8])^{S_4}=(\Q(\zeta)[Q_8])^{C_2}=H(\theta).\] 

\begin{remark}  As C. Greither has noted there exist finite groups $N$ for which $\Q[N]$ has no absolutely semisimple  Hopf forms.  For instance, take $N$ to be any non-abelian group of odd order $p^3$ and exponent $p^2$ \cite[\S 3]{CGKKKTU20}.
\end{remark}

\section{Connection to Hopf-Galois Theory}

There is a natural application of the map $\Theta$ to Hopf-Galois theory.  

\subsection{Review of Greither-Pareigis theory}

Let $E/K$ be a Galois extension with group $G$.  Let $H$ be a finite dimensional, cocommutative $K$-Hopf algebra with comultiplication 
$\Delta: H\rightarrow H\otimes_R H$, counit $\varepsilon: H\rightarrow K$, and coinverse $S: H\rightarrow H$.  Suppose there is a $K$-linear action $\cdot$ of $H$ on $E$ that satisfies
\[h\cdot(xy) = \sum_{(h)} (h_{(1)}\cdot x)(h_{(2)}\cdot y),\quad h\cdot 1 =\varepsilon(h)1\]
for all $h\in H,\;x,y\in E$, where $\Delta(h)=\sum_{(h)}h_{(1)}\otimes h_{(2)}$ is Sweedler notation. Suppose also that the $K$-linear map 
\[j: E\otimes_K H \to \End_K(E),\; j(x\otimes h)(y)=x(h\cdot y)\] 
is an isomorphism of vector spaces over $K$. Then $H$ together with this action, denoted as $(H,\cdot)$, provides a {\em Hopf-Galois structure} on 
$E/K$.   Two Hopf-Galois structures $(H_1,\cdot_1)$, $(H_2,\cdot_2)$ on $E/K$ are {\em isomorphic} if there is
a Hopf algebra  isomorphism $f: H_1\rightarrow H_2$ for which $h\cdot_1 x = f(h)\cdot_2 x$ for all $x\in E$, 
$h\in H$  (see \cite[Introduction]{CRV15}).

C. Greither and B. Pareigis \cite{GP87} have given a complete classification of Hopf-Galois structures up to isomorphism.   Denote by $\Perm(G)$ the group of permutations of $G$.  A subgroup $N\le \Perm(G)$ is {\em regular} if $|N|=|G|$ and $\eta(g)\ne g$ for all 
$\eta\ne 1_N$, $g\in G$.   Let $\lambda: G\rightarrow \Perm(G)$, $\lambda(g)(h)=gh$, denote the left regular representation.  
A subgroup $N\le \Perm(G)$ is {\em normalized} by $\lambda(G)\le \Perm(G)$ if $\lambda(G)$ is contained in the 
normalizer of $N$ in $\Perm(G)$.

\begin{theorem} [Greither and Pareigis]\label{GP} Let $E/K$ be a Galois extension with group $G$.  
There is a one-to-one correspondence between isomorphism classes of Hopf Galois structures on $E/K$ and regular subgroups of 
$\mathrm{Perm}(G)$ that are normalized by $\lambda(G)$.
\end{theorem}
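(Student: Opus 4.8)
The plan is to prove the correspondence by Galois descent, moving between $K$-objects and $E$-objects along the faithfully flat extension $E/K$ and using base change in both directions. The organizing tool is the classical $E$-algebra isomorphism
\[
\phi\colon E\otimes_K E \xrightarrow{\ \sim\ } \Map(G,E),\qquad \phi(a\otimes b)(\sigma)=a\,\sigma(b),
\]
which identifies $E\otimes_K E$ with the split $E$-algebra $\prod_{\sigma\in G}E$ and hence supplies a complete set of primitive orthogonal idempotents $\{e_\sigma\}_{\sigma\in G}$ indexed by $G$. Under $\phi$, the Galois action of $G=\Gal(E/K)$ on $E\otimes_K E$ becomes a semilinear action whose induced permutation of the index set $G$ is the left regular representation $\lambda$; this is exactly the descent datum that detects when an $E$-object is defined over $K$.

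First I would carry out the direction from subgroups to structures. Given a regular subgroup $N\le\Perm(G)$ that is normalized by $\lambda(G)$, the group algebra $E[N]$ is an $E$-Hopf algebra, and I equip it with a semilinear $G$-action: $G$ acts on the coefficient field $E$ by the Galois action and on the basis $N$ by conjugation through $\lambda(G)$. This conjugation lands back in $N$ precisely because $\lambda(G)$ normalizes $N$, so by Galois descent the action is effective and $H=(E[N])^{G}$ is a $K$-Hopf algebra satisfying $E\otimes_K H\cong E[N]$. The regular action of $N$ on $G$ makes $N$ permute the idempotents $e_\sigma$, yielding an $E$-linear action of $E[N]$ on $\Map(G,E)\cong E\otimes_K E$ by $E$-algebra automorphisms; taking $G$-fixed points descends this to a $K$-linear action of $H$ on $E$. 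That this action is compatible with $\Delta$ and $\varepsilon$ is built into the construction, and the crucial point is that the map $j\colon E\otimes_K H\to\End_K(E)$ is an isomorphism if and only if $N$ acts regularly on $G$ — this is where regularity is needed.

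Conversely, starting from a Hopf-Galois structure $(H,\cdot)$ on $E/K$, necessarily $\dim_K H=|G|$, and I base change to obtain the $E$-Hopf algebra $B=E\otimes_K H$ acting on $E\otimes_K E\cong\Map(G,E)$; this is again a Hopf-Galois structure, now on the split extension $\Map(G,E)/E$. The heart of the proof, and the step I expect to be the main obstacle, is the lemma that every Hopf-Galois structure on such a split extension is a group algebra. Concretely, the group-like elements $N=\{b\in B:\Delta(b)=b\otimes b,\ \varepsilon(b)=1\}$ act as $E$-algebra automorphisms, hence permute the primitive idempotents $e_\sigma$ and define a map $N\to\Perm(G)$; the Hopf-Galois hypothesis then forces $|N|=|G|$, identifies $B$ with $E[N]$, and makes the image a regular subgroup of $\Perm(G)$. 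Finally, the semilinear $G$-action on $B$ that records the fact that $H$ is defined over $K$ descends to conjugation of $N$ by $\lambda(G)$, so $\lambda(G)$ must normalize $N$. Matching this descent datum with the normalization condition is the delicate bookkeeping, since one must verify that the permutation $G$-action built into $\phi$ really is $\lambda$ and that it acts on $N$ by conjugation inside $\Perm(G)$.

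To conclude, I would check that the two assignments are mutually inverse: $H\mapsto E\otimes_K H$ and $B\mapsto B^{G}$ are inverse equivalences under faithfully flat descent, so $(E[N])^{G}$ recovers $N$ via its group-likes, and conversely the subgroup extracted from $(H,\cdot)$ reconstructs $H$ as a fixed ring. It remains to upgrade this to a bijection on \emph{isomorphism} classes in the sense defined above: an isomorphism $f\colon H_1\to H_2$ intertwining the two actions base changes to a Hopf-algebra isomorphism $E[N_1]\to E[N_2]$ that is the identity on the induced permutation action on $\{e_\sigma\}$, which forces $N_1=N_2$ as subgroups of $\Perm(G)$; conversely equal subgroups yield isomorphic structures. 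This establishes the desired one-to-one correspondence.
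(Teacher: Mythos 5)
The paper itself does not prove this theorem: it quotes it from \cite{GP87} and sketches only the descent direction (regular $N$ normalized by $\lambda(G)$ gives $H=(E[N])^{G}$ with $E\otimes_K H\cong E[N]$ and the stated action on $E$). Your first half reproduces exactly that sketch, and your bookkeeping there is accurate: under $\phi(a\otimes b)(\sigma)=a\,\sigma(b)$ the semilinear descent action on $E\otimes_K E$ does permute the idempotents $e_\sigma$ by left translation $\lambda$, normalization of $N$ by $\lambda(G)$ is precisely what makes the conjugation action land in $N$, and isomorphism classes of structures correspond to \emph{equal} (not merely conjugate) regular subgroups, as you say. So your overall strategy is the canonical Greither--Pareigis one.

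The one substantive gap is the lemma you yourself flag as the main obstacle, and for which you give a conclusion but no mechanism: that a Hopf-Galois structure $B=E\otimes_K H$ on the split extension $\Map(G,E)/E$ is necessarily a group algebra on a regular subgroup. Writing ``the Hopf-Galois hypothesis then forces $|N|=|G|$, identifies $B$ with $E[N]$'' begs the question --- it is not evident that $B$ has \emph{any} group-likes beyond $1$, let alone $|G|$ linearly independent ones. In \cite{GP87} this is where the real work happens: one dualizes and views $\Spec(\Map(G,E))$ as a torsor under the finite group scheme $\Spec(B^{*})$; since the split \'etale algebra $\Map(G,E)$ has an $E$-rational point, the torsor is trivial, which forces the group scheme to be constant, i.e. $B\cong E[N]$ with $N$ acting simply transitively on the $|G|$ primitive idempotents --- and that simple transitivity is regularity. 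Equivalently one must extract $|G|$ independent group-likes from the Galois isomorphism $\Map(G,E)\otimes_E B\cong \Map(G,E)\otimes_E \Map(G,E)$. Without this argument (or a substitute) your converse direction does not close; everything else in your proposal is correct outline-level reasoning that the descent formalism does deliver.
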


One direction of this correspondence works by Galois descent:\ Let $N$ be a regular subgroup of $\Perm(G)$ normalized by $\lambda(G)$.  
Then $G$ acts on the group algebra $E[N]$ through the Galois action on $E$ and conjugation by $\lambda(G)$ on $N$, i.e., 
\[g(x\eta) = g(x)(\lambda(g)\eta\lambda(g^{-1})), g\in G, \;x\in E,\; \eta\in N.\]
We denote the conjugation action of $\lambda(g)\in\lambda(G)$ on $\eta\in N$ by $^g\eta$.  Let $H$ denote the fixed ring
\[(E[N])^G=\{x\in E[N]:\ g(x)=x, \forall g\in G\}.\]
Then $H$ is an $n$-dimensional $E$-Hopf algebra, $n=[E:K]$, and $E/K$ admits the Hopf Galois structure 
$(H,\cdot)$ \cite[p. 248, proof of 3.1 (b)$\Longrightarrow$ (a)]{GP87}, \cite[Theorem 6.8, pp. 52-54]{Ch00}.  
The action of $H$ on $E/K$ is given as 
\[\Big(\sum_{\eta\in N} r_{\eta} \eta \Big)\cdot x = \sum_{\eta\in N}r_{\eta} \eta^{-1}[1_G](x),\] 
see \cite[Proposition 1]{Ch11}.   By \cite[p. 249, proof of 3.1, (a) $\Longrightarrow$  (b)]{GP87},
\[E\otimes_K H \cong E\otimes_K K[N]\cong E[N],\]
as $E$-Hopf algebras, so $H$ is an $E$-form of $K[N]$.  

If $N$ is isomorphic to the abstract group $N'$, then we say 
that the Hopf-Galois structure $(H,\cdot)$ on $E/K$ is of {\em type} $N'$.  

\subsection{Connection to the map $\Theta$}

If $(H,\cdot)$ is a Hopf-Galois structure on $E/K$ of type $N$, then the Hopf algebra $H$ is a Hopf form of $K[N]$.  
Thus $H$ can be recovered via Theorem \ref{HP}.  In other words, with $F=\Aut(N)$, there is an $F$-Galois extension $L$ of $K$ with 
\[\Theta(L)=H=(L[N])^F.\]   


We seek a method to construct $L$.  We first prove a lemma.

\begin{lemma}  \label{normal} Let $E/K$ be a Galois extension with group $G$.  Let $(H,\cdot)$ be a Hopf-Galois structure 
corresponding to regular subgroup $N$.   Let
\[W=\{g\in \lambda(G):\ ^g\eta = \eta,\forall \eta\in N\}.\]
Then $W$ is a normal subgroup of $\lambda(G)$.
\end{lemma}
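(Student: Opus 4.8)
The plan is to exhibit $W$ as the kernel of a group homomorphism and then invoke the standard fact that the kernel of a group homomorphism is a normal subgroup of its domain.

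First I would package the conjugation action into a map
\[
\phi: \lambda(G)\longrightarrow \Aut(N),\qquad \phi(g)(\eta)={}^{g}\eta=g\eta g^{-1},
\]
for $g\in\lambda(G)$ and $\eta\in N$. The key point, and the only place the hypotheses are used, is to verify that $\phi$ is well defined, i.e.\ that $\phi(g)\in\Aut(N)$ for every $g\in\lambda(G)$. Here I would invoke the assumption that $N$ is normalized by $\lambda(G)$: this says exactly that $g\eta g^{-1}\in N$ for all $g\in\lambda(G)$ and $\eta\in N$, so $\phi(g)$ carries $N$ into $N$. Since $\phi(g)$ is the restriction to $N$ of the inner automorphism $x\mapsto gxg^{-1}$ of $\Perm(G)$, it is a group homomorphism, and applying the same reasoning to $g^{-1}$ produces a two-sided inverse; hence $\phi(g)$ is a bijective homomorphism $N\to N$, that is, an element of $\Aut(N)$.

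Next I would check that $\phi$ is itself a group homomorphism. For $g,g'\in\lambda(G)$ and $\eta\in N$ one computes
\[
\phi(gg')(\eta)=(gg')\eta(gg')^{-1}=g\bigl(g'\eta\,g'^{-1}\bigr)g^{-1}=\phi(g)\bigl(\phi(g')(\eta)\bigr),
\]
so $\phi(gg')=\phi(g)\circ\phi(g')$. With $\phi$ in hand, I would observe that, directly from the definition of $W$,
\[
W=\{g\in\lambda(G):{}^{g}\eta=\eta\ \forall\,\eta\in N\}=\{g\in\lambda(G):\phi(g)=\id_N\}=\ker\phi.
\]
Since the kernel of a group homomorphism is always a normal subgroup of the domain, it follows that $W$ is a normal subgroup of $\lambda(G)$, which is the assertion of the lemma.

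I do not anticipate any genuine obstacle here: once $\phi$ is set up the argument is purely formal. The single substantive step is the well-definedness of $\phi$, where one must use the normalization hypothesis to guarantee that conjugation by $\lambda(G)$ preserves $N$ and therefore lands in $\Aut(N)$ rather than merely in the symmetric group $\Perm(G)$.
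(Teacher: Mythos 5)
Your proof is correct, but it takes a different route from the paper. You realize $W$ as the kernel of the conjugation homomorphism $\phi:\lambda(G)\rightarrow \Aut(N)$, with the normalization hypothesis used exactly where it should be, namely to ensure $\phi(g)$ lands in $\Aut(N)$ rather than merely in $\Perm(G)$; normality is then automatic. The paper instead argues via the centralizer: it writes $W=\lambda(G)\cap N^{opp}$, where $N^{opp}$ is the centralizer of $N$ in $\Perm(G)$, notes that the normalizer of $N$ in $\Perm(G)$ coincides with the normalizer of $N^{opp}$, so $\lambda(G)$ normalizes $N^{opp}$, and concludes that $\lambda(G)$ normalizes the intersection $W$. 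Your kernel argument is the more standard and self-contained one, and it has a concrete payoff: the first isomorphism theorem gives $\lambda(G)/W\cong \mathrm{im}\,\phi\le \Aut(N)$ for free, which is precisely the observation the paper needs immediately after the lemma (that $\lambda(G)/W$ embeds in $F=\Aut(N)$) and which feeds into Proposition \ref{cond}. The paper's route, by contrast, buys the structural identity $W=\lambda(G)\cap N^{opp}$, which connects $W$ to the opposite group $N^{opp}$ and hence to the opposite Hopf-Galois structure --- information your argument does not surface. Both proofs are complete; the choice between them is a matter of which byproduct one wants on hand.
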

 
\begin{proof}  
Certainly $W\le \lambda(G)$.  Let $N^{opp}$ denote the centralizer of $N$ in $\Perm(G)$ and note that
\[W=\lambda(G)\cap N^{opp}.\]
The normalizer of $N$ in 
$\Perm(G)$ equals the normalizer of $N^{opp}$ in $\Perm(G)$.  Therefore since $\lambda(G)$ normalizes $N$, $\lambda(G)$ also normalizes $N^{opp}$.   Moreover, $\lambda(G)$ normalizes itself and so $\lambda(G)$ normalizes $W$, i.e., 
$W\triangleleft \lambda(G)$.

\end{proof}

Th quotient group $\lambda(G)/W$ can be viewed as a subgroup of $F=\Aut(N)$. 

\begin{proposition} \label{cond} Let $E/K$ be a Galois extension with group $G$.  Let $(H,\cdot)$ be a Hopf-Galois structure 
corresponding to regular subgroup $N$.   Let $F=\mathrm{Aut}(N)$, let
\[W=\{g\in \lambda(G):\ ^g\eta = \eta,\forall \eta\in N\},\]
and let $L=E^W$.   If $\lambda(G)/W\cong F$, then 
$\Theta(L)=H$.
Moreover, $L$ is the smallest extension of $K$ for which $L\otimes_K H\cong L[N]$. 
\end{proposition}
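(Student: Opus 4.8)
The plan is to realize the fixed ring $H=(E[N])^{\lambda(G)}$ by peeling off $W$ first. Conjugation defines a homomorphism $\phi\colon \lambda(G)\to\Aut(N)=F$, $\phi(\lambda(g))={}^{g}(-)$, whose kernel is exactly $W$; the hypothesis $\lambda(G)/W\cong F$ says precisely that $\phi$ is surjective, so it descends to an isomorphism $\bar\phi\colon \lambda(G)/W\xrightarrow{\sim}F$. Since $W\triangleleft\lambda(G)$ by Lemma \ref{normal} and $\lambda(G)\cong G=\Gal(E/K)$, the fixed field $L=E^{W}$ is Galois over $K$ with $\Gal(L/K)\cong\lambda(G)/W\cong F$. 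In particular $L$ is a field that is Galois over $K$ with group $F$, hence an $F$-Galois extension of $K$ in the sense of Theorem \ref{galois} (with $U=F$, $n=1$), so $\Theta(L)=(L[N])^{F}$ is defined.

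I would then compute the fixed ring in two stages. Every $\eta\in N$ is fixed by $W$ and $W$ acts on $E$ through the Galois action, so $W$ acts on $E[N]=\bigoplus_{\eta\in N}E\eta$ coefficientwise and $(E[N])^{W}=\bigoplus_{\eta\in N}E^{W}\eta=L[N]$. As $W$ is normal in $\lambda(G)$, taking fixed points in stages gives $H=(E[N])^{\lambda(G)}=\big((E[N])^{W}\big)^{\lambda(G)/W}=(L[N])^{\lambda(G)/W}$. A coset $gW$ acts on $x\eta\in L[N]$ by $x\eta\mapsto g(x)\,{}^{g}\eta$, that is, on $L$ through $\Gal(L/K)$ and on $N$ through $\phi(g)\in\Aut(N)$; transporting along $\bar\phi$ and the resulting identification $\Gal(L/K)\cong F$, this is exactly the action of $F$ on $L[N]$ used in Theorem \ref{HP} to define $\Theta$. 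Hence $H=(L[N])^{F}=\Theta(L)$, which is the first assertion; note this step genuinely uses the hypothesis, since without surjectivity of $\phi$ one only gets $H=(L[N])^{U}$ for the proper subgroup $U=\mathrm{im}(\phi)\subsetneq F$.

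For minimality I would pass to the descent/cohomology description already invoked for $Q_8$. Fix an algebraic closure $\bar K\supseteq E$, let $L'/K$ be any field extension splitting $H$, and choose a finite Galois extension $M/K$ inside $\bar K$ containing $E$ and $L'$. Galois descent classifies the $M$-split Hopf forms of $K[N]$ by $H^{1}(\Gal(M/K),\Aut(N))$ with trivial coefficient action, i.e.\ by conjugacy classes of homomorphisms $\Gal(M/K)\to F$, and, by construction of $H$ as the $\phi$-twisted form, its class is represented by $\rho_{M}\colon \Gal(M/K)\twoheadrightarrow G\xrightarrow{\phi}F$ (this is a homomorphism precisely because $\Gal$ acts trivially on the constant group $\Aut(N)$). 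For a trivial coefficient action the only cocycle cohomologous to the trivial one is the trivial homomorphism, so $H$ splits over $L'$ if and only if $\rho_{M}$ restricts trivially to $\Gal(M/L')$, i.e.\ $\Gal(M/L')\subseteq\ker\rho_{M}$. Since $\ker\rho_{M}$ is the preimage of $W$ under $\Gal(M/K)\twoheadrightarrow G$, its fixed field is $M^{\ker\rho_{M}}=E^{W}=L$; thus $H$ splits over $L'$ exactly when $L'\supseteq L$. Together with the fact that $L$ itself splits $H$ (from $\Theta(L)=H$ and Theorem \ref{HP}), this shows $L$ is the smallest field extension of $K$ over which $L\otimes_{K}H\cong L[N]$.

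I expect the minimality argument to be the main obstacle. The delicate points there are identifying the descent cocycle of $H$ with the conjugation homomorphism $\phi$, and verifying that ``splits over $L'$'' translates without conjugacy ambiguity into the containment $\Gal(M/L')\subseteq\ker\rho_{M}$. By contrast the first assertion is essentially the bookkeeping of the two fixed-ring stages, where the only subtle point is that the Galois action on $L$ and the automorphism action on $N$ are matched by the single isomorphism $\bar\phi$, so that the residual $\lambda(G)/W$-action is literally the $F$-action defining $\Theta$.
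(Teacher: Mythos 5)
Your proof of the first assertion is exactly the paper's argument, just written out in full: the paper's one-line computation $H=(E[N])^{G}=(L[N])^{F}$ is precisely your two-stage fixed-point calculation, namely $(E[N])^{W}=L[N]$ (since $W$ fixes $N$ pointwise by its very definition and acts on coefficients through $\Gal(E/K)$), followed by $\bigl((E[N])^{W}\bigr)^{\lambda(G)/W}=(L[N])^{F}$, with the hypothesis $\lambda(G)/W\cong F$ entering exactly where you say it does, to identify the residual action with the $F$-action defining $\Theta$ in Theorem \ref{HP}. Where you genuinely diverge is the minimality claim: the paper disposes of it by citing \cite[Corollary 3.2]{GP87}, whereas you reprove that corollary from scratch via the descent classification $H^{1}(\Gal(M/K),\Aut(N))$ with trivial coefficient action --- the same machinery the paper itself invokes in its $Q_8$ section --- identifying the class of the twisted form $H=(E[N])^{G}$ with the homomorphism $\rho_{M}$ inflated from the conjugation map $\phi$, and using the (correct) observation that with trivial action the only cocycle cohomologous to the trivial one is the trivial homomorphism, so that splitting over $L'$ forces $\Gal(M/L')\subseteq\ker\rho_{M}$, i.e.\ $L'\supseteq L$. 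This is essentially the internal content of the cited Greither--Pareigis corollary, so your route buys self-containedness at the cost of length, and both of your delicate points (the cocycle of $(E[N])^{G}$ is $\phi$; trivial action kills the conjugacy ambiguity) check out. The only caveat, harmless at the paper's level of precision, is that your minimality argument as written treats algebraic extensions $L'$ inside a fixed $\bar K$ (you choose a finite Galois $M$ containing $E$ and $L'$); to cover an arbitrary field extension one would add that the group-like elements of $L'\otimes_{K}H$ have coordinates algebraic over $K$, so any splitting field contains a copy of $L$.
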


\begin{proof}   By the Fundamental theorem of Galois theory,
$L=E^W$ is Galois with group $F\cong \lambda(G)/W$, so $L$ is an $F$-Galois extension.  Now, 
\[H=(E[N])^{G} = (L[N])^F,\]
and so, $\Theta(L)=H$.  The second statement follows from \cite[Corollary 3.2]{GP87}.
\end{proof}

\begin{example}\label{biquad}
We consider the splitting field $E$ of the polynomial $x^4-10x^2+1$ over $\Q$.  One has $E=\Q(\sqrt {2}, \sqrt 3)$; 
$E/\Q$ is Galois with group 
$C_2\times C_2=\{1,\sigma,\tau,\tau\sigma\}$  with Galois action
\[\sigma(\sqrt {2})=\sqrt {2},\quad \sigma(\sqrt 3)=-\sqrt 3,\quad \tau(\sqrt {2})=-\sqrt {2},\quad \tau(\sqrt 3)=\sqrt 3.\]
By \cite{By02}, there are three Hopf-Galois structures on $E/\Q$ of type $C_4$, each of which is determined by a regular subgroup $N\cong C_4$ normalized by $\lambda(C_2\times C_2)$.    One such $N$ is given as
\[N=\{(1),(1,3,2,4),(1,2)(3,4),(1,4,2,3)\},\]
where $1:=1$, $2:=\sigma$, $3:=\tau$, $4:=\tau\sigma$, and 
\[\lambda(C_2\times C_2)=\{(1),(1,2)(3,4),(1,3)(2,4),(1,4)(2,3)\}.\]
$N$ is a regular subgroup of $\Perm(C_2\times C_2)$ normalized by $\lambda(C_2\times C_2)$ with
$N\cong C_4$. 

Let $(H,\cdot )$ be the corresponding Hopf-Galois extension with $H=(E[N])^G$.  As one can check 
\[W=\{g\in \lambda(C_2\times C_2):\ ^g\eta =\eta, \forall \eta\in N\}=\{(1),(1,2)(3,4)\}=\{1,\sigma\}.\]   
We have $\lambda(C_2\times C_2)/W\cong F=\mathrm{Aut}(C_4)\cong C_2$, and the fixed field $L=E^W=\Q(\sqrt{2})$ is an $F$-Galois extension of $\Q$.  So by Proposition \ref{cond}
\[\Theta(L)=H.\]  
\end{example}

\begin{example}  \label{S3}
Let $E=\Q(\zeta_3,\root 3\of 2)$ be the splitting field of $x^3-2$ over $\Q$, so that $E/\Q$ is Galois with group $S_3$. 
By \cite[Lemma 1]{KKTU19}, there are three Hopf-Galois
structures on $E/\Q$ of type $C_6$, all of which have isomorphic Hopf algebras $H$ by \cite[Proposition 3]{KKTU19}.  We have
\[W=\{g\in \lambda(S_3):\ ^g\eta =\eta, \forall \eta\in C_6\}\cong C_3.\]   
Thus $\lambda(S_3)/W\cong C_2=\Aut(C_6)$ and 
$E^W=\Q(\zeta_3)$.  Consequently, 
\[\Theta(\Q(\zeta_3))=H\]
by Proposition \ref{cond}.  In this case $H=(\Q[C_6])^*$ is the absolutely semisimple Hopf form of $\Q[C_6]$.  
\end{example}

\begin{example}  \label{complete}  Let $E/K$ be a Galois extension with group $G$ where $G$ is a non-abelian complete group (i.e., $G$ has trivial center and $G\cong \Aut(G)$).  For instance, $G=S_n$, $n>6$, is a non-abelian complete group.

The subgroup $N=\lambda(G)$ is a regular subgroup of $\Perm(G)$ normalized by itself and corresponds to the 
canonical non-classical  Hopf-Galois structure with Hopf algebra $H_\lambda$.   In this case $W$ is trivial since the center of $G$ is trivial, and so $E^W=E$ and 
\[\lambda(G)/W\cong \lambda(G)\cong \Aut(N)=F.\]
Thus by Proposition \ref{cond}
\[\Theta(E)=H_\lambda.\]

\end{example}

\subsection{When $\lambda(G)/W\not\cong F$}

In Example \ref{biquad}, Example \ref{S3}, and Example \ref{complete} we have $\lambda(G)/W\cong F$ and 
Proposition \ref{cond} can be applied to find $L$ so that $\Theta(L)=H$.  
We next consider the case where $\lambda(G)/W$ is a proper subgroup of $F$. 

Let $E/\Q$ be Galois with quaternion group $Q_8=\{\pm1,\pm i,\pm j,\pm k\}$.
Then $\{\pm1\}$ is the unique order $2$ subgroup of $Q_8$.  Let $K=E^{\{\pm1\}}$.  
Since $\{\pm1\}$ is normal in $Q_8$, $K/\Q$ is Galois with group 
\[Q_8/\{\pm1\}=\{\overline 1,
\overline i,\overline j,\overline k\}\cong C_2\times C_2.\]  
Thus $K/\Q$ is the unique biquadratic subfield of $E$.  For $s,t\in \{i,j,k\}$ with $s\not = t$, 
there exist elements $\alpha$, $\beta$ in $K$ satisfying 
$\alpha^2\in \Q$, $\beta^2\in \Q$ with $K=\Q(\alpha,\beta)$.   We have
\[s(\alpha)=\alpha,\quad  t(\alpha)=-\alpha,\quad  s(\beta)=-\beta,\quad  t(\beta)=\beta.\] 
Note that 
\[E^{\langle t\rangle} = \Q(\beta).\]

S. Taylor and P. J. Truman \cite[Lemma 2.3]{TT19}
have shown that there are $6$ Hopf-Galois structures of type $N=C_8$ on $E/\Q$.  The $6$ regular subgroups are given 
as the $6$ subgroups 
\[\{C_{s,t}\mid s,t\in \{i,j,k\}, s\not = t\},\]
where
 \[C_{s,t}=\langle\eta_{s,t}\rangle\cong C_8.\]
Here the generator $\eta_{s,t}$ denotes the permutation in cycle notation
\[\eta_{s,t} = (1,s,t,t^{-1}s^{-1},s^2,s^{-1},t^{-1},st).\]
The action of $G$ on $C_{s,t}$ is given as
\[^s\eta_{s,t} = \lambda(s)\eta_{s,t}\lambda(s^{-1})=\eta_{s,t}^3,\]
\[^t\eta_{s,t} = \lambda(t)\eta_{s,t}\lambda(t^{-1}) = \eta_{s,t}.\]
(See \cite[proof of Lemma 2.3]{TT19}.)

Let $s,t\in \{i,j,k\}$, $s\not = t$.  Then by Theorem \ref{GP} the fixed ring
\[H_{s,t} = (E[C_{s,t}])^G\]
is the Hopf algebra attached to the Hopf-Galois structure on $E/\Q$;
$H_{s,t}$ is an $E$-form of $\Q[C_8]$.  Note that
\[W=\{g\in \lambda(Q_8):\ ^g\eta = \eta,\forall \eta\in C_{s,t}\}=\langle t\rangle,\]
with $\lambda(Q_8)/W\cong C_2$.  Thus $\lambda(Q_8)/W$ is a proper subgroup of $F=\Aut(C_8)\cong C_2\times C_2$.

\subsection{Finding $L$ so that $\Theta(L)=H_{s,t}$}

We want recover $H_{s,t}$ using Theorem \ref{HP}, i.e., with $F=\Aut(C_8)$, we want to find an 
$F$-Galois extension $L$ for which 
\[\Theta(L)=(L[C_8])^F=H_{s,t}.\]

We have 
\[F=\Z_8^* = \{1,3,5,7\}\cong C_2\times C_2.\]
Let $U=\langle 3\rangle\le F$.  We identify $3$ with the element $s\in \{i,j,k\}$.  Then $\Q(\beta)$ is a Galois extension of 
$\Q$ with group $U$; $3(\beta)=-\beta$.   (Recall that $E^{\langle t\rangle} = \Q(\beta)$.)   Let $\{1,5\}$ be a left transversal for $U$ in $F$, i.e., the left cosets of $U$ in $F$
are 
\[\langle 3\rangle,\quad 5\langle 3\rangle.\]

Let
\[L=\Q(\beta)f_1\oplus \Q(\beta)f_2\]
where $f_1$, $f_2$ are mutually orthogonal idempotents.  A typical element of $L$ can be written as
\[x= (a_0+a_1\beta)f_1+(b_0+b_1\beta)f_2,\]
where $a_0,a_1,b_0,b_1\in \Q$.   Following \cite[proof of Theorem 4.2]{Pa90}, $L$ is an $F$-Galois extension of $\Q$ with action of $F$ on $L$ given as
\[1\cdot x = x,\]
\[3\cdot x = (a_0-a_1\beta)f_1+(b_0-b_1\beta)f_2,\]
\[5\cdot x = (b_0+b_1\beta)f_1+(a_0+a_1\beta)f_2,\]
\[7\cdot x = (b_0-b_1\beta)f_1+(a_0-a_1\beta)f_2.\]

\begin{proposition}  \label{Q8} For $s,t\in \{i,j,k\}$, $s\not = t$, let $H_{s,t}$ be the Hopf algebra for the Hopf-Galois structure on 
$E/\Q$ corresponding to regular subgroup $C_{s,t}$.  Let $L$ be as above.  Then
\[\Theta(L)=(L[C_8])^F=H_{s,t}.\]
\end{proposition}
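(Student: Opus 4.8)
The plan is to identify both $H_{s,t}$ and $\Theta(L)=(L[C_8])^F$ with one and the same fixed ring over the quadratic field $\Q(\beta)$, realizing the proposition as a Shapiro-type identity $(L[C_8])^F\cong(\Q(\beta)[C_8])^{U}$ for the $F$-Galois extension $L$ induced from the $U$-Galois field $\Q(\beta)$. First I would reduce $H_{s,t}=(E[C_{s,t}])^{Q_8}$ by peeling off $W=\langle t\rangle$. Since $^t\eta_{s,t}=\eta_{s,t}$, the group $W$ acts trivially on $C_{s,t}$ and only through the Galois action on $E$, with $E^{\langle t\rangle}=\Q(\beta)$; as $\langle t\rangle\triangleleft Q_8$ (every subgroup of $Q_8$ is normal) this gives
\[H_{s,t}=\bigl((E[C_{s,t}])^{\langle t\rangle}\bigr)^{Q_8/\langle t\rangle}=(\Q(\beta)[C_{s,t}])^{\langle \bar s\rangle},\]
where $\bar s$ generates $Q_8/\langle t\rangle\cong C_2$ and acts by $\beta\mapsto-\beta$ (because $s(\beta)=-\beta$) and by $^s\eta_{s,t}=\eta_{s,t}^3$. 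Identifying $C_{s,t}=\langle\eta_{s,t}\rangle$ with $C_8=\langle\sigma\rangle$ via $\eta_{s,t}\mapsto\sigma$, the automorphism $\bar s$ becomes $\sigma\mapsto\sigma^3$, i.e.\ the element $3\in\Z_8^*=\Aut(C_8)$, and $\beta\mapsto-\beta$ is exactly the action of $3$ on the $U$-Galois field $\Q(\beta)$. Hence $H_{s,t}=(\Q(\beta)[C_8])^{U}$ with $U=\langle 3\rangle$.

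Next I would compute $(L[C_8])^F$ directly, where $F$ acts on $L[C_8]$ through the given action on $L=\Q(\beta)f_1\oplus\Q(\beta)f_2$ together with $g(\sigma)=\sigma^{g}$. The subgroup $U=\langle 3\rangle$ fixes $f_1$ and $f_2$ and acts on each component precisely as $3$ acts on $\Q(\beta)[C_8]$, so $(L[C_8])^{U}=H_{s,t}f_1\oplus H_{s,t}f_2$. Imposing invariance under the remaining coset representative $5$, which (from the displayed formulas) sends $hf_1\mapsto\phi(h)f_2$ and $hf_2\mapsto\phi(h)f_1$, where $\phi$ is the $\Q(\beta)$-algebra automorphism of $\Q(\beta)[C_8]$ induced by $\sigma\mapsto\sigma^5$, and using $5^2\equiv1\pmod 8$, I obtain
\[(L[C_8])^F=\{\,hf_1+\phi(h)f_2:\ h\in H_{s,t}\,\}.\]
Because $\Aut(C_8)$ is abelian and $\phi$ fixes $\beta$, the map $\phi$ commutes with the $U$-action and preserves $H_{s,t}$, so this set is well defined and $\Psi:H_{s,t}\to(L[C_8])^F$, $\Psi(h)=hf_1+\phi(h)f_2$, is a $\Q$-linear bijection.

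Finally I would verify that $\Psi$ is an isomorphism of $\Q$-Hopf algebras. Multiplicativity and preservation of the unit are immediate from $f_if_j=\delta_{ij}f_i$ together with the fact that $\phi$ is a ring homomorphism. For the coalgebra structure I would pass to the splitting $L\cong\Q(\beta)\times\Q(\beta)$, under which $L[C_8]\cong\Q(\beta)[C_8]\times\Q(\beta)[C_8]$ as Hopf algebras and $\Psi(h)=(h,\phi(h))$; by Theorem \ref{HP} the comultiplication of $\Theta(L)$ is the unique one becoming the group-algebra comultiplication after base change to $L$, and the analogous descent identifies the comultiplication of $H_{s,t}$ over $\Q(\beta)$, so compatibility of $\Psi$ with $\Delta$ reduces to the fact that $\phi$, being induced by a group automorphism of $C_8$, is a coalgebra map of $\Q(\beta)[C_8]$. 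I expect this last point to be the main obstacle: the comultiplications on $H_{s,t}$ and on $\Theta(L)$ a priori live over the \emph{different} base rings $\Q(\beta)$ and $L$, so the check must be organized through the descent characterization of Theorem \ref{HP} (and its analogue for $H_{s,t}$) rather than by naively comparing $\Delta(\sigma^i)=\sigma^i\otimes\sigma^i$ across incompatible tensor products. Once this is arranged, $\Psi$ is a Hopf isomorphism and $\Theta(L)=(L[C_8])^F=H_{s,t}$ follows.
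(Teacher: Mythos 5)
Your proof is correct, but it takes a genuinely different route from the paper's. The paper proceeds by brute force: it writes out a general element $\sum_{m=0}^7((a_{m,0}+a_{m,1}\beta)f_1+(b_{m,0}+b_{m,1}\beta)f_2)\eta^m$ of $L[C_{s,t}]$, solves the linear conditions for invariance under all of $F=C_2\times C_2$, extracts an explicit eight-element $\Q$-basis of $(L[C_8])^F$, and matches it term by term against the explicit basis of $H_{s,t}$ from \cite[page 231]{TT19}, defining the isomorphism $\psi$ on basis elements; the compatibility with the coalgebra structure is then handled by a short ad hoc remark via the embedding $L[(f_2-f_1)(1-\eta^4)]\rightarrow L[1-\eta^4]\cong L[C_2]$. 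You instead argue structurally: peeling off the normal subgroup $W=\langle t\rangle$ (which acts trivially on $C_{s,t}$ and has $E^{\langle t\rangle}=\Q(\beta)$) identifies $H_{s,t}=(E[C_{s,t}])^{Q_8}$ with $(\Q(\beta)[C_8])^U$, $U=\langle 3\rangle$, and computing $(L[C_8])^F$ in two stages --- first under $U$, which acts componentwise and yields $H_{s,t}f_1\oplus H_{s,t}f_2$, then under $5$, which swaps the idempotents --- produces the twisted diagonal $\{hf_1+\phi(h)f_2:h\in H_{s,t}\}$ and the natural isomorphism $\Psi$. Your checks are sound: $3$ and $5$ generate $F$, so invariance under these two suffices; $\phi$ preserves $H_{s,t}$ because $\Aut(C_8)$ is abelian and $\phi$ fixes $\beta$; and your descent organization of the comultiplication verification is the right fix for the base-ring mismatch you correctly flag, since the natural map $H\otimes_\Q H\rightarrow L[C_8]\otimes_L L[C_8]$ is injective by faithful flatness, so equality of $(\Psi\otimes\Psi)\circ\Delta$ and $\Delta\circ\Psi$ may be tested there, where it reduces to $\phi$ being a coalgebra (indeed Hopf) map of $\Q(\beta)[C_8]$, as it is induced by a group automorphism; the counit and antipode are handled identically. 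What each approach buys: the paper's computation yields explicit basis formulas and ties $\Theta(L)$ directly to the Taylor--Truman description of $H_{s,t}$ (which it needs anyway for the subsequent proposition recovering \cite[Lemma 3.3]{TT19}), whereas your argument is independent of the explicit basis in \cite{TT19}, exhibits the isomorphism basis-free as a Shapiro-type identity consistent with the induced-algebra construction in Theorem \ref{galois}, and replaces the paper's terse Hopf-structure remark with a routine descent argument.
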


\begin{proof}    

We compute a $\Q$-basis for the fixed ring directly.  Let $\eta=\eta_{s,t}$ and let 

 \[\sum_{m=0}^7 ((a_{m,0}+a_{m,1}\beta)f_1+(b_{m,0}+b_{m,1}\beta)f_2)\eta^m\]
be a typical element of $L[C_{s,t}]$.   Then 

\[g\left (\sum_{m=0}^7 ((a_{m,0}+a_{m,1}\beta)f_1+(b_{m,0}+b_{m,1}\beta)f_2)\eta^m\right )\]
\[=\sum_{m=0}^7 ((a_{m,0}+a_{m,1}\beta)f_1+(b_{m,0}+b_{m,1}\beta)f_2)\eta^m\]
for all $g\in F=C_2\times C_2$ if and only if

\[a_{0,0}=b_{0,0},\quad a_{0,1}=b_{0,1}=0,\]

\[a_{1,0}=a_{3,0}=b_{5,0}=b_{7,0},\quad a_{1,1} = -a_{3,1} = b_{5,1} = -b_{7,1},\]

\[b_{1,0}=b_{3,0}=a_{5,0}=a_{7,0},\quad b_{1,1} = -b_{3,1} = a_{5,1} = -a_{7,1},\]

\[a_{2,0}=a_{6,0}=b_{2,0}=b_{6,0},\quad a_{2,1} = -a_{6,1} = b_{2,1} = -b_{6,1},\]

\[a_{4,0}=b_{4,0},\quad a_{4,1}=b_{4,1}=0.\]
It follows that a $\Q$-basis for $(L[C_8])^F$ is 

\[1,\quad \eta^4,\quad \eta^2+\eta^6,\quad \beta(\eta^2-\eta^6),\]

\[f_1(\eta+\eta^3)+f_2(\eta^5+\eta^7),\quad f_2(\eta+\eta^3)+f_1(\eta^5+\eta^7),\]

\[\beta(f_1(\eta-\eta^3)+f_2(\eta^5-\eta^7)),\quad \beta(f_2(\eta-\eta^3)+f_1(\eta^5-\eta^7)).\]
Thus a $\Q$-basis for $(L[C_8])^F$ can be written as

\[{1\over 8}\left (1+\eta+\eta^2+\eta^3+\eta^4+\eta^5+\eta^6+\eta^7\right ),\]

\[{1\over 8}\left (1-\eta+\eta^2-\eta^3+\eta^4-\eta^5+\eta^6-\eta^7\right ),\]

\[{1\over 4}\left (1-\eta^2+\eta^4-\eta^6\right ),\quad {1\over 4}\beta\left ((\eta-\eta^3)+(\eta^5-\eta^7)\right ),\]

\[{1\over 2}(1-\eta^4),\quad {1\over 2}\beta(\eta^2-\eta^6),\]
together with two other basis elements

\begin{eqnarray*}
&&{1\over 2}(f_2(\eta+\eta^3)+f_1(\eta^5+\eta^7))-{1\over 2}(f_1(\eta+\eta^3)+f_2(\eta^5+\eta^7)) \\
&&\quad\ =\ {1\over 2}(f_2-f_1)((\eta^3-\eta^7)+(\eta-\eta^5)),
\end{eqnarray*}
and 

\begin{eqnarray*}
&&{1\over 2}\beta (f_1(\eta-\eta^3)+f_2(\eta^5-\eta^7))-{1\over 2}\beta (f_2(\eta-\eta^3)+f_1(\eta^5-\eta^7)) \\
&&\quad\ =\ {1\over 2}\beta (f_2-f_1)((\eta^3-\eta^7)-(\eta-\eta^5)).
\end{eqnarray*}
Note that 

\[(f_2-f_1)^2=f_2+f_1=1.\]

Considering the $\Q$-basis for $H_{s,t}$ given in \cite[page 231]{TT19},
we see that there is an isomorphism of $\Q$-algebras 
\[\psi: (L[C_8])^F\rightarrow H_{s,t}\]
defined by
\[{1\over 8}\left (1+\eta+\eta^2+\eta^3+\eta^4+\eta^5+\eta^6+\eta^7\right )\mapsto {1\over 8}\left (1+\eta+\eta^2+\eta^3+\eta^4+\eta^5+\eta^6+\eta^7\right ),\]

\[{1\over 8}\left (1-\eta+\eta^2-\eta^3+\eta^4-\eta^5+\eta^6-\eta^7\right )\mapsto {1\over 8}\left (1-\eta+\eta^2-\eta^3+\eta^4-\eta^5+\eta^6-\eta^7\right ),\]

\[{1\over 4}\left (1-\eta^2+\eta^4-\eta^6\right )\mapsto {1\over 4}\left (1-\eta^2+\eta^4-\eta^6\right ),\]

\[{1\over 4}\beta\left ((\eta-\eta^3)+(\eta^5-\eta^7)\right )\mapsto {1\over 4}\beta\left ((\eta-\eta^3)+(\eta^5-\eta^7)\right ),\]

\[{1\over 2}(1-\eta^4)\mapsto {1\over 2}(1-\eta^4),\]

\[{1\over 2}\beta(\eta^2-\eta^6)\mapsto {1\over 2}\beta(\eta^2-\eta^6),\]

\[{1\over 2}(f_2-f_1)((\eta^3-\eta^7)+(\eta-\eta^5))\mapsto {1\over 2}((\eta^3-\eta^7)+(\eta-\eta^5)),\]

\[{1\over 2}\beta (f_2-f_1)((\eta^3-\eta^7)-(\eta-\eta^5))\mapsto {1\over 2}\beta ((\eta^3-\eta^7)-(\eta-\eta^5)).\]

Moreover, since there is an embedding of $L$-Hopf algebras
\[L[(f_2-f_1)(1-\eta^4)]\rightarrow L[1-\eta^4]\cong L[\langle \eta^4\rangle]\cong L[C_2]\]
given as $(f_2-f_1)(1-\eta^4)\mapsto 1-\eta^4$,
we conclude that the $\Q$-algebra isomorphism $\psi$ is also an isomorphism of $\Q$-Hopf algebras.

\end{proof}

As a byproduct of Proposition \ref{Q8}, we can recover \cite[Lemma 3.3]{TT19}.

\begin{proposition}  Let $s,s',t,t'\in \{i,j,k\}$ with $s\not = t$, $s'\not = t'$.  Then $H_{s,t}\cong H_{s',t'}$ as Hopf algebras
if and only if $t=t'$. 
\end{proposition}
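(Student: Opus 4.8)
The plan is to exploit Proposition~\ref{Q8} together with the bijectivity of $\Theta$ from Theorem~\ref{HP}. The key observation is that the $F$-Galois extension $L$ produced in Proposition~\ref{Q8}, for which $\Theta(L)=H_{s,t}$, depends only on the second index $t$ and not on $s$. I would begin by recalling that Proposition~\ref{Q8} gives $\Theta(L)=H_{s,t}$ with $L=\Q(\beta)f_1\oplus\Q(\beta)f_2\cong\Q(\beta)\times\Q(\beta)$ and $\Q(\beta)=E^{\langle t\rangle}$, where the $F=\Z_8^*$-action is the canonical Pareigis action of Theorem~\ref{galois} attached to the subgroup $U=\langle 3\rangle$ of index $2$ and the transversal $\{1,5\}$. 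Since the element $\beta$ (up to a rational square) and this entire $F$-Galois structure are determined by the quadratic field $E^{\langle t\rangle}$ alone, the extension $L$ depends only on $t$; I will write $L=L_t$.

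Next I would translate the Hopf-algebra isomorphism question into a question about $F$-Galois extensions. Because $\Theta$ is a bijection on isomorphism classes, $H_{s,t}\cong H_{s',t'}$ as Hopf algebras if and only if $L_t\cong L_{t'}$ as $F$-Galois extensions. If $t=t'$ then $L_t=L_{t'}$ outright, whence $H_{s,t}\cong H_{s',t'}$. Conversely, an isomorphism of $F$-Galois extensions $L_t\cong L_{t'}$ is in particular an isomorphism of $\Q$-algebras, and by uniqueness of the decomposition of a separable commutative $\Q$-algebra into fields it forces $\Q(\beta_t)\cong\Q(\beta_{t'})$ as $\Q$-algebras.

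It then remains to observe that the three quadratic fields $E^{\langle i\rangle}$, $E^{\langle j\rangle}$, $E^{\langle k\rangle}$ are pairwise non-isomorphic: they are precisely the three distinct quadratic subfields of the biquadratic field $K=E^{\{\pm 1\}}=\Q(\alpha,\beta)$, hence lie in distinct nontrivial square classes of $\Q^\times/(\Q^\times)^2$. Thus $\Q(\beta_t)\cong\Q(\beta_{t'})$ forces $t=t'$, which settles both directions.

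I expect the step requiring the most care to be the reduction from Hopf-algebra isomorphism to isomorphism of $F$-Galois extensions, that is, making precise that $\Theta$ reflects isomorphisms (this is exactly its injectivity as a bijection of isomorphism classes). Once $\Theta^{-1}(H_{s,t})=L_t$ is seen to depend only on $t$, the residual field-theoretic input---distinctness of the three quadratic subfields of $K$---is routine, so the crux is really the bookkeeping that isolates the dependence of $L$ on $t$ alone.
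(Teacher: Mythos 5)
Your proposal is correct and follows essentially the same route as the paper's own proof: invoke Proposition~\ref{Q8} to identify $\Theta^{-1}(H_{s,t})$ as $L=\Q(\beta)f_1\oplus\Q(\beta)f_2$ with $\Q(\beta)=E^{\langle t\rangle}$ depending only on $t$, use the bijectivity of $\Theta$ on isomorphism classes to convert $H_{s,t}\cong H_{s',t'}$ into $L\cong L'$, and conclude $t=t'$ from $\Q(\beta)\cong\Q(\gamma)$. If anything, you make explicit two points the paper leaves implicit --- that $\Theta$ reflects isomorphisms and that the three quadratic subfields $E^{\langle i\rangle}$, $E^{\langle j\rangle}$, $E^{\langle k\rangle}$ of $K$ are pairwise non-isomorphic --- which is a welcome sharpening, not a deviation.
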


\begin{proof}   If  $t=t'$ then the fixed fields $E^{\langle t\rangle}$ and $E^{\langle t'\rangle}$ are both equal to 
$\Q(\beta)$.  Let 
\[L=\Q(\beta)f_1\oplus \Q(\beta)f_2.\]  
Then $\Theta(L)=H_{s,t}=H_{s',t'}$ so that $H_{s,t}\cong H_{s',t'}$.  

For the converse, suppose that $H_{s,t}\cong H_{s',t'}$.  Then there is an $F$-Galois extension $L$ of $\Q$ with 
$\Theta(L)=H_{s,t}$ and an $F$-Galois extension $L'$ of $\Q$ with $\Theta(L')=H_{s',t'}$.   Necessarily,
$L\cong \Q(\beta)f_1\oplus \Q(\beta)f_2$ where $E^{\langle t\rangle} = \Q(\beta)$ and 
$L'\cong \Q(\gamma)f_1\oplus \Q(\gamma)f_2$ where $E^{\langle t'\rangle} = \Q(\gamma)$.   
Now $H_{s,t}\cong H_{s',t'}$ implies that $L\cong L'$, which says that $\Q(\beta)\cong \Q(\gamma)$, hence $t=t'$.   
\end{proof}

\end{document}